\def\@MHputnumber{\if@MHhaslab\@MHstep\if@MHleft\@MHwritelabel\fi%
        \hbox{\textnormal{(\@MHeqno)}}%
	\if@MHleft\else\@MHwritelabel\fi%
	\fi\@MHresetlab}
\renewcommand{\crochet}[1]{\left\langle #1 \right\rangle}
\renewcommand{\norm}[1]{\left\| #1 \right\|}
\renewcommand{\normm}[1]{{\left\vert\!\left\vert\!\left\vert #1 \right\vert\!\right\vert\!\right\vert}}
\def\${|\!|\!|}
\newcommand{\rh}{\mathrm{h}}
\newcommand{\rZ}{\mathrm{Z}}
\begin{document}

\title{Landau Hamiltonian with Gaussian white noise potential and the asymptotics of its bottom of spectrum}
\author{Yueh-Sheng Hsu\footnote{\email{yueh-sheng.hsu@tuwien.ac.at}}}
\institute{TU Wien}

\vspace{2mm}

\date{\today}
\maketitle

\begin{abstract}
	We present a simple construction of a random Schrödinger operator subject to a magnetic field with a regularity as low as $0^-$-Hölder and a Gaussian white noise electric potential on a two-dimensional bounded box. This construction is based on the exponential Ansatz introduced in \cite{HL15} and leverages the semigroup approach developed in \cite{HL26}.
	
	The proposed construction enables us to generalise an asymptotic result for the bottom of the spectrum of the two-dimensional continuous Anderson Hamiltonian, first proved in \cite{CvZ21}, to the magnetic case. Our choice of potential not only covers the case of a uniform magnetic field, but also those which would break translational invariance. 
	\medskip
	
	\noindent
	{\bf AMS 2010 subject classifications}: Primary 35J10, 60H15; Secondary 47A10. \\
	\noindent
	{\bf Keywords}: {\it Landau Hamiltonian; Anderson Hamiltonian; white noise; random Schrödinger operator; singular SPDE; self-adjointness; spectrum}
\end{abstract}


\section{Introduction}\label{sec:landau_intro}
We consider the random Schrödinger operator taking the form of
\begin{equ}\label{eq:Landau}
	H := (i \nabla + \bA)^2 + V
\end{equ}
defined on a two-dimensional bounded domain. In the context of quantum mechanics, \eqref{eq:Landau} arises from quantising the Hamiltonian (i.e., the total energy) of a single-particle system subjected to the electric field $-\nabla V$ and the magnetic field $\nabla \times \bA$. Here, the scalar field $V$ and the vector field $\bA$ are referred to as the electric and magnetic potentials, respectively, and both of them can be random fields. In this work, we are interested in the case where the electric potential $V$ is modelled by the Gaussian white noise $\xi$, while $\bA$ is given by a vector-valued function of class $1^-$-Hölder.\\

Let us begin by motivating the model. When $V$ vanishes and $\bA$ is chosen such that the magnetic field $\nabla \times \bA$ is uniform with strength $1$, the operator \eqref{eq:Landau} is called the \emph{Landau Hamiltonian}. This operator has the notable property that, as an unbounded operator on $L^2(\R^2)$, its spectrum consists of eigenvalues $2n+1$, where $n = 0, 1, 2, \dots$, each with infinite multiplicity. These eigenvalues are referred to as the \emph{Landau levels}, and they are closely related to the \emph{quantum Hall effect}: when a thin conducting sample is subjected to a perpendicular magnetic field, an induced voltage appears in the lateral direction across the sample; in the quantum regime, the ratio of this voltage to the current strength (i.e., the Hall resistance) is quantised and can be expressed in terms of the Landau levels. A physically relevant consideration is the presence of material impurities, modelled by a random perturbation $V$, in the conducting sample. This leads us to study an operator of the form \eqref{eq:Landau}, and a natural question to pursue is how the random perturbation affects the spectrum of $H$.

On the other hand, \eqref{eq:Landau} can also be viewed as a generalisation of the \emph{continuous Anderson Hamiltonian} $H = -\Delta + V$, i.e., when $\bA = 0$ and $V$ is random. The Anderson Hamiltonian was originally formulated in a discrete setting; its spectral behaviour has sparked great interest and remains a focus of intense research. Although the continuous analogue of the Anderson Hamiltonian can be more technically challenging, even in terms of its definition, it is widely believed that intuition from its discrete counterpart carries over to the continuous setting. In particular, it is conjectured that as long as $V$ is \emph{sufficiently random}, the spectrum of $H$ becomes pure point, and its eigenfunctions are exponentially localised. This phenomenon is referred to as \emph{Anderson localisation}. Numerous efforts and results have been devoted to rigorously establishing this phenomenon under various conditions. A natural question, therefore, is how these results are affected or modified by the introduction of a magnetic field.\\

Among all random potentials, we are particularly interested in the case of \emph{Gaussian white noise}, where the random potential $V$ is a $\delta$-correlated Gaussian field. While this assumption is considered a simplification in the physics literature due to the short-range correlation of the field, the construction of such an operator is highly non-trivial from a mathematical perspective. Indeed, the Gaussian white noise should be interpreted as a random distribution, in the sense that $\{\xi(\varphi); \varphi \in C^\infty_c\}$ is a Gaussian family with covariance
\[\E[\xi(\varphi_1) \xi(\varphi_2)] = \crochet{\varphi_1, \varphi_2}_{L^2}, \quad \varphi_1, \varphi_2 \in C^\infty_c.\]
It is well-known that, almost surely, $\xi$ is only an $\alpha$-Hölder distribution with $\alpha < -1$ in dimension $2$. Therefore, the operator \eqref{eq:Landau} does not make sense as a sum of the Laplacian and the potential $\xi$ interpreted as a multiplication operator: since the operator domain of $H$ contains functions at best $(2+\alpha)$-Hölder, the product between the domain functions and $\xi$ is in general undefined as $\alpha + (2+\alpha) < 0$. Such operators belong to the class of \emph{singular Schrödinger operators}, a name which echoes that of \emph{singular stochastic PDEs}: indeed, the breakthroughs in the latter domain, notably the theories of regularity structures \cite{Hai14} and of paracontrolled distributions \cite{GIP15}, have proved essential in their study.

Two separate issues arise in the construction of such operators. The first is to give a meaning to the ill-defined products through \emph{renormalisation}; in the Anderson case $\bA = 0$, to which most of the literature is devoted, this is resolved either within the theory of paracontrolled distributions \cite{AC15, GUZ20, CvZ21, Mou22} or that of regularity structures \cite{Lab19, MvZ25}.
The second issue is to extract a self-adjoint operator from the renormalised data; this has been achieved by a direct description of the operator and its (random) domain \cite{AC15, GUZ20, CvZ21}, or through one of the operator's avatars: its resolvent \cite{Lab19}, its Dirichlet form \cite{MvZ25}, or its semigroup \cite{HL26}.

These constructions cover a variety of settings. The first goes back to Allez and Chouk \cite{AC15}, who studied the operator on the two-dimensional torus $\T^2$. The construction was subsequently extended to bounded domains in higher dimensions and/or to various boundary conditions (e.g., on $\T^3$ \cite{GUZ20, Lab19}, bounded boxes of dimensions $2$ or $3$ with zero Dirichlet boundary conditions \cite{Lab19, CvZ21, MvZ25}, zero Neumann boundary conditions \cite{CvZ21, MvZ25}, and compact manifolds \cite{Mou22}), as well as to the full space $L^2(\R^d)$ (see \cite{Uek23, Ugu22, HL26} for the case $d = 2$ and \cite{HL26} for $d=3$). However, the literature for the case of non-vanishing magnetic fields with white noise perturbations is very limited. To the best of our knowledge, the only work in this direction is by Mouzard and Morin \cite{MM22}, who considered the case where the magnetic field itself is modelled by Gaussian white noise.

Beyond the construction of these operators, the operator spectrum is another key to understanding the localisation phenomenon as well as the so-called \emph{intermittency} of the associated parabolic PDE, $\partial_t u = -H u$. Intermittency refers to the situation where most of the mass of the solution $u$ concentrates on isolated small islands. Investigating intermittency often requires understanding $U^{B_t}$, the total mass of the solution within a box $B_t$ whose size grows with time $t$. Using the spectral expansion of $H$, one can heuristically approximate the large-time asymptotics of $U^{B_t}$ by
\[U^{B_t}(t) \approx e^{-t \lambda_1^{B_t}},\]
where $\lambda_1^{B_t}$ is the lowest eigenvalue of $H$ on the box $B_t$. Consequently, understanding the asymptotics of $\lambda_1^{B_t}$ as $t \to \infty$ becomes crucial.

In the Anderson case with white noise perturbation, progress has been made on this topic, particularly concerning the asymptotics of eigenvalues as the box size increases. For example, Chouk and van Zuijlen \cite{CvZ21} obtained results in dimension $2$, while the work of Labbé and the author \cite{HL23} addressed dimension $3$. Based on these asymptotic results, König, Perkowski and van Zuijlen \cite{KPv22} deduced the large-time asymptotics of the total mass of the solution to the continuous parabolic Anderson model (PAM) in dimension $2$, while Ghosal and Yi \cite{GY23} studied the large-time asymptotics of the PAM solution as well as its fractality in dimensions $2$ and $3$.\\

The purpose of the present work is twofold: we will provide a simple construction of the operator in the singular case with non-vanishing magnetic field, and then study the asymptotics of its lowest eigenvalues.

As our first main result, we will construct the operator \eqref{eq:Landau} on any bounded open box with Gaussian white noise potential $V=\xi$ and with any magnetic vector potential $\bA$ in the class $\cC_{\mathrm{loc}}^{1-\kappa}(\R^2;\R^2)$ of locally $(1-\kappa)$-Hölder functions, for all $\kappa \in (0, 
\frac12)$. The construction proposed here does not rely on advanced singular SPDE theories such as regularity structures or paracontrolled distributions, but instead uses the exponential transformation trick first introduced in \cite{HL15}. Additionally, we utilise the Hille--Yosida theory to extract the desired operator from the solution theory of parabolic SPDEs. Let us mention that each of the two ingredients of our construction has appeared separately in the literature: the exponential transformation was employed in the Dirichlet-form construction of \cite{MvZ25}, and the extraction of the operator from its semigroup in \cite{HL26}; their combination, however, appears to be new, and it is this combination that keeps the whole construction elementary. As in the previously cited works, the construction is carried out in the sense of \emph{renormalisation}. Specifically, we fix a compactly supported, radially symmetric function $\rho \in C^\infty_c(\R^2)$, define the mollifier sequence $\rho_\eps = \eps^{-d}\rho(\cdot/\eps)$, and consider the regularised noise $\xi^{(\eps)} = \rho_\eps \ast \xi$. With the renormalisation constant $C^{(\eps)}$ defined in Section \ref{sec:Q}, we take the regularised operator
\begin{equ}
	H_\eps := (i\nabla + \bA)^2 + \xi^{(\eps)} + C^{(\eps)}\;,
\end{equ}
which is well-defined and self-adjoint on any bounded open box $B$ with zero Dirichlet boundary condition. We are now ready to state our first main result.
\begin{theorem}\label{thm1}
	For $\kappa \in (0, 1/2)$, let $\bA \in \cC^{1-\kappa}_{\mathrm{loc}}(\R^2; \R^2)$. Fix a bounded open box $B \subset \R^2$. Then, there exists a random operator $H$ on $L^2(B; \C)$ which, almost surely, satisfies the zero Dirichlet boundary condition and is self-adjoint, and is such that
	\[ H_\eps \xrightarrow[\eps \to 0]{} H, \quad \text{in the norm-resolvent sense in $L^2(B; \C)$ and in probability.}\]
	This operator $H$ is therefore regarded as a realisation of the formal expression $(i\nabla + \bA)^2 + \xi$.
	
	Furthermore, almost surely, the random operator $H$ has compact resolvent, and hence its spectrum is pure point and consists of eigenvalues
	\[\lambda^B_1 \leq \lambda^B_2 \leq \lambda^B_3 \leq \cdots.\]
\end{theorem}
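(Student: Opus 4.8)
The plan is to build $H$ from a parabolic SPDE via the exponential Ansatz, extract it through Hille–Yosida, and then pass to the limit. The starting point is the formal substitution $u = e^{Y}v$ where $Y$ solves a linear equation involving the rough noise $\xi$, so that $e^{-Y}(i\nabla+\bA)^2 e^{Y}$ becomes an operator whose coefficients are of positive regularity; this is exactly the mechanism of \cite{HL15}, now carried out in the magnetic setting. Concretely, I expect that with $Y$ chosen so that (a renormalised version of) $\Delta Y = \xi$, conjugation produces a divergence-form magnetic operator $L_Y$ with bounded-below quadratic form and coefficients in $\cC^{\alpha}$ for some $\alpha>0$, uniformly in the mollification parameter after the renormalisation constant $C^{(\eps)}$ is subtracted. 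The semigroup $e^{-tL_Y}$ on $L^2(B;\C)$ with Dirichlet conditions is then analytic, contractive after a shift, and — this is the crucial input — is positivity/compactness-friendly because $B$ is bounded.

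First I would record the $\eps$-level statement: $H_\eps=(i\nabla+\bA)^2+\xi^{(\eps)}+C^{(\eps)}$ is self-adjoint and bounded below on $L^2(B;\C)$ with Dirichlet conditions, its form domain is $H^1_0(B;\C)$, and by Rellich its resolvent is compact. Next I would show that the conjugated operators $e^{-Y^{(\eps)}}H_\eps e^{Y^{(\eps)}}$, after absorbing $C^{(\eps)}$, converge in the relevant topology (coefficient convergence in $\cC^{\alpha-}$, forcing terms in $\cC^{\alpha-2+}$, etc.) to a limiting operator $L_Y$ with the same structural features; this is where the semigroup estimates of \cite{HL24} enter, giving uniform resolvent bounds. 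From Hille–Yosida / the analytic semigroup bounds one obtains a limiting $C_0$-semigroup, hence a closed operator $L_Y$, and then $H := e^{-Y} L_Y e^{Y}$ is defined as a self-adjoint operator on $L^2(B;\C)$ via the unitary-up-to-bounded-factor conjugation (one must check $e^{\pm Y}$ maps the relevant domains boundedly, using $Y\in\cC^{1-}\subset L^\infty$ on the bounded box, so that self-adjointness and semiboundedness transfer). The norm-resolvent convergence $H_\eps\to H$ in probability then follows by combining the deterministic norm-resolvent convergence $L_{Y^{(\eps)}}\to L_Y$ (at fixed realisation of the regularised noise data) with the convergence in probability of $Y^{(\eps)}$ and the auxiliary stochastic objects in their natural spaces, plus a $3\eps$-type argument and the uniform bounds to control the conjugation factors.

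For the final assertion, I would argue that $H$ has compact resolvent. The cleanest route: since $e^{\pm Y}$ are bounded invertible on $L^2(B;\C)$, $H$ has compact resolvent iff $L_Y$ does; and $L_Y$ is a uniformly elliptic divergence-form magnetic operator on the bounded box with form domain $H^1_0(B;\C)$, so $(L_Y+c)^{-1}$ factors through the compact embedding $H^1_0(B)\hookrightarrow L^2(B)$. Alternatively, and perhaps more robustly given the low regularity of the coefficients, I would use norm-resolvent convergence directly: each $(H_\eps+c)^{-1}$ is compact, the limit $(H+c)^{-1}$ is their norm limit, and the norm limit of compact operators is compact. Once the resolvent is compact and $H$ is self-adjoint and bounded below, the spectral theorem for operators with compact resolvent gives a discrete spectrum accumulating only at $+\infty$, i.e. eigenvalues $\lambda^B_1\le\lambda^B_2\le\cdots$, which is the claim.

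The main obstacle I anticipate is not the abstract spectral-theory wrap-up but the uniform-in-$\eps$ control of the conjugated operators: making precise the function spaces in which the magnetic divergence-form coefficients and the stochastic forcing terms live, proving the renormalised combination $e^{-Y^{(\eps)}}(i\nabla+\bA)^2 e^{Y^{(\eps)}}+C^{(\eps)}$ is genuinely convergent (and semibounded uniformly) rather than merely formally so, and checking that the magnetic cross terms $\bA\cdot\nabla Y$ and $|\nabla Y|^2$ — which is exactly where $\xi$'s negative regularity bites and the renormalisation is consumed — can be defined using only the mild, $\cC^{1-}$ regularity of $\bA$ and the semigroup estimates, without invoking regularity structures or paracontrolled calculus. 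Handling the interaction of the (merely $\cC^{1-\kappa}$) magnetic potential with the rough $Y$ near the boundary of $B$, so that the Dirichlet form domain is preserved under conjugation, is the delicate technical point.
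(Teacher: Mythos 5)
Your overall strategy coincides with the paper's: exponential Ansatz $u = e^{-h}v$ with $h$ essentially $G*\xi$, solve the transformed parabolic problem to get a bounded semigroup on $L^2(B)$, apply Hille--Yosida to get the generator, and pass to the limit in the stochastic data using a continuity estimate of the solution map. The compactness argument (either via $R_t$ mapping into $\cH^{1-\kappa}\hookrightarrow\hookrightarrow L^2$, or via norm limits of compact resolvents) is also fine and matches the paper in spirit.

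There is, however, a genuine gap in the way you propose to obtain self-adjointness. You write that $H := e^{-Y}L_Y e^{Y}$ is self-adjoint ``via the unitary-up-to-bounded-factor conjugation''. This does not work as an abstract argument: $e^{\pm Y}$ are bounded invertible but \emph{not} unitary, and similarity by a non-unitary operator does not preserve self-adjointness. Indeed, the transformed operator $L_Y = (i\nabla+\bA)^2 + 2\nabla h\cdot\nabla - 2i\nabla h\cdot\bA + \zeta$ carries a genuine first-order drift and is \emph{not} self-adjoint, so there is no self-adjointness to transfer. The paper instead proves symmetry directly at the semigroup level: for smooth data $(\bA,h,\zeta)$ one checks $\tfrac{d}{ds}\langle R_s g, R_{t-s}g\rangle_{L^2} = 0$, hence $R_t = R_t^*$ and $R_t\ge 0$, and then extends to rough data by the local Lipschitz continuity of $q\mapsto R_t(q)$; self-adjointness of the generator then follows from standard semigroup theory. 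You need some version of that argument, not the conjugation heuristic. A smaller imprecision: among the new terms, only $|\nabla Y|^2$ is genuinely singular and consumes the Wick renormalisation; the magnetic cross term $\bA\cdot\nabla Y$ is a product of $\cC^{1-\kappa}$ and $\cC^{-\kappa}$ and hence classically defined for $\kappa<1/2$, so it is not a place ``where the renormalisation is consumed''.
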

\begin{remark}\label{rmk:spectral_gap}
	The eigenvalues in Theorem \ref{thm1} are enumerated in non-decreasing order with multiplicity, and no strict inequality is claimed. In the Anderson case $\bA = 0$, the ground state is known to be simple on a two-dimensional closed manifold: the heat kernel of the associated semigroup is positive and the Krein--Rutman theorem applies; quantitative lower bounds on the gap $\lambda_2 - \lambda_1$ are also available; see \cite[Prop.~4.1, Cor.~4.2 and Prop.~4.16]{BDM25}. When $\bA \neq 0$, the semigroup acts on complex-valued functions and this positivity mechanism seems to break down.
\end{remark}

The second main result of the article concerns the asymptotics, as the size of the box $B$ grows to infinity, of the eigenvalues $\lambda_n^B$. Here, we will specialise to vector potentials $\bA$ verifying Assumption \ref{ass:A} below. For $x \in \R^2$ and $L > 0$, we set $B_{x, L} = x + (-L/2, L/2)^2$, the open box with side length $L$ centred at $x$.
\begin{assumption}\label{ass:A}
	For $\kappa \in (0, \frac12)$, let $\alpha \ge 1- \kappa$ and $0 \leq \gamma < \frac{1+\alpha}{2}$. Assume that $\bA$ is a locally bounded random vector field on $\R^2$ independent of $\xi$ such that almost surely there exist constants $C, L_0 > 0$ for which
	\begin{equ}[eq:A_bound]
		|\bA(x) - \bA(y)| \leq C \log(1 + L)^\gamma |x - y|^\alpha, \quad x, y \in B_{0, L},
	\end{equ}
	for all $L \ge L_0$. In particular, the constants $C$ and $L_0$ can be random.
\end{assumption}
In particular, a random vector field $\bA$ satisfying Assumption \ref{ass:A} belongs to the class $\cC^{1-\kappa}_{\mathrm{loc}}$ almost surely. Setting $\lambda^{L}_n = \lambda^{B_{0, L}}_n$ to be the eigenvalues of $(i\nabla + \bA)^2 + \xi$ on $L^2(B_{0, L})$ given by Theorem \ref{thm1}, we claim the following:
\begin{theorem}\label{thm2}
	Under Assumption \ref{ass:A}, for every $n \geq 1$, one has almost surely that
	\[\lambda_{n}^L \sim - C_{\mathrm{GN}} \log L \quad \text{as $L \to +\infty$,}\]
	where $C_{\mathrm{GN}}$ is the optimal constant for the Gagliardo--Nirenberg inequality in dimension $2$, i.e.,
	\begin{equ}\label{eq:Gagliardo--Nirenberg}
		C_{\mathrm{GN}} = \sup_{f \in \cH^1 \setminus \{0\}} \frac{\norm{f}_{L^4}^4}{\norm{\nabla f}_{L^2}^{2} \norm{f}_{L^2}^{2}}.
	\end{equ}
\end{theorem}
\noindent Theorem \ref{thm2} is therefore an extension of \cite{CvZ21} to the magnetic case. A major difference in our case is that our operator, and therefore its eigenvalues, are no longer translation invariant, as opposed to the Anderson Hamiltonian. This difficulty is handled by Proposition \ref{prop:Landau-Anderson} below: after gauging away the value $\bA(x)$ at a fixed reference point $x$, the principal eigenvalue can be compared with that of the Anderson Hamiltonian, with an error controlled by the oscillation of $\bA$, which Assumption \ref{ass:A} keeps small at the relevant scales.\\

Notably, the following choices of magnetic potential are covered by the present scope.
\begin{example}[Uniform magnetic field]
	One can consider the case where the magnetic field $\nabla \times \bA$ is independent of the space variable. A choice of vector potential for such a case is given by
	\begin{equ}\label{eq:A_unif}
		\bA(x_1, x_2) = \frac{1}{2} (-x_2, x_1), \quad (x_1, x_2) \in \R^2.
	\end{equ}
	In particular, such $\bA$ satisfies Assumption \ref{ass:A} with $\alpha = 1$ and $\gamma = 0$. 
\end{example}
\begin{example}[Gaussian free field]
	One can also consider rough magnetic fields. For example, let $\Gamma$ be the Gaussian free field (independent of $\xi$) and consider vector potentials associated to the magnetic field given by $\Gamma$. We can take $\bA = \nabla^{\perp} \Delta^{-1} \Gamma$, where $\nabla^{\perp} = (-\partial_2, \partial_1)$. In particular, one has $\nabla \times \bA = \Gamma$, and $\bA \in \cC^{1-\kappa}_{\mathrm{loc}}(\R^2;\R^2)$ almost surely for any $\kappa \in (0,\frac12)$. Moreover, it is well-known that this $\bA$ satisfies Assumption \ref{ass:A} with $\alpha = 1-\kappa$ and $\gamma = \frac12$.
\end{example}

Let us now outline the ideas of proof for Theorems \ref{thm1} and \ref{thm2}. The key idea is to reformulate the problem of operator construction into the construction of a semigroup. More precisely, we consider the singular SPDE:
\[\partial_t u = (i\nabla + \bA)^2 u + \xi u, \quad u(t = 0, \cdot) = g \in L^2.\]
Notice that when $\bA = 0$, this PDE reduces to the parabolic Anderson model (PAM), one of the archetypal examples in the breakthroughs of singular SPDE theories \cite{Hai14, GIP15}. We argue that our parabolic equation can be solved using arguments similar to those developed for PAM in \cite{HL15}. Furthermore, we deduce that the solution map induces a semigroup $(P_t)$ of bounded operators which is strongly continuous in the time variable. This allows us to apply a classical result due to Hille and Yosida (see, for example, \cite[Theorem II.3.8]{EN00}), yielding the infinitesimal generator of the semigroup, which turns out to be our desired operator. This line of reasoning is in the same spirit as a recent work by Labbé and the author \cite{HL26}, which addressed the construction of the Anderson Hamiltonian on the full spaces $\R^2$ and $\R^3$. While their case is much more technical since the semigroup consists of unbounded operators to which the Hille--Yosida theory does not apply, our present setting allows us to carry out the same spirit of proof without too much of a technical burden. As a consequence of the scheme, the construction of $H$ can be factorised into a composition of several maps, as illustrated in Figure \ref{fig:scheme}.

\begin{figure}[h]\label{fig:scheme}
	\centering
	\begin{tabular}{ccccccc}
		$\Omega$ & $\xrightarrow{\ \text{renormalisation}\ }$ & $\ccA \times \ccM$ & $\xrightarrow{\ \text{deterministic PDE}\ }$ & $\cL(L^2(B; \C))$& $\xrightarrow{\ \text{spectral theory}\ }$ & $\R$\\
		$\xi$ & $\mapsto$ &$(\bA, Q(\xi))$ &$\mapsto$& $P_t(\bA, Q(\xi))$ &$\mapsto$& $\lambda_n(\bA, Q(\xi))$
	\end{tabular}
	\caption{Scheme for the construction of $H$ and its eigenvalues $\lambda_n, n \ge 1$. Here, $\Omega$ denotes the canonical sample space for $\xi$, $\ccA$ is a function space accommodating the field $\bA$ (in our case, $\ccA = \cC^{1-\kappa}_{\mathrm{loc}}(\R^2;\R^2)$), and $\ccM$ is an auxiliary space designed to accommodate the \emph{models} $Q$ (in our case, $\ccM = \cC^{1-\kappa}_{\mathrm{loc}}(\R^2) \times \cC^{-\kappa}_{\mathrm{loc}}(\R^2))$. The first arrow is the renormalisation step, the second the resolution of a deterministic PDE, and the third an argument of spectral theory.}
\end{figure}

Notice that we have introduced an auxiliary space $\ccM$ in the scheme. This space plays a role analogous to the \emph{models} or \emph{enhanced noises} introduced in the theories of regularity structures \cite{Hai14} and paracontrolled distributions \cite{GIP15}. Concretely, $\ccM$ is the space on which the renormalisation procedure is performed: an element $q = (h, \zeta) \in \ccM$ is a couple of deterministic fields serving as placeholders for the renormalised stochastic objects that we will construct in Section \ref{sec:Q}. The random variable $Q(\xi)$ is nothing but this pair of renormalised fields. While the construction of the operator $P_t(\bA, q)$ for any given instance $(\bA, q) \in \ccA \times \ccM$ is purely analytical, the map $Q: \Omega \to \ccM$ has to be defined via probabilistic arguments. Let us also stress that the vector potential $\bA$ enters the scheme as a fixed element of $\ccA$. When $\bA$ is random, as allowed by Assumption \ref{ass:A} and illustrated by the Gaussian free field example, it is assumed independent of $\xi$; all the constructions below are then performed conditionally on $\bA$, that is, with $\bA$ fixed to a typical realisation of its law (see also the beginning of Section \ref{sec:asymptotics}). An advantage of this scheme is that, except for the map $Q$, every other mapping in Figure \ref{fig:scheme} is continuous. In particular, this implies that the eigenvalues $\lambda_n$ are continuous functions of the data $(\bA, q) \in \ccA \times \ccM$.

This brings us to the proof of Theorem \ref{thm2}. Our argument follows the outline set in \cite{CvZ21}: as noted there, the asymptotic result claimed in Theorem \ref{thm2} is essentially a consequence of a probability tail bound for the eigenvalues (Theorem \ref{thm:ev_tail_estimate}), which in turn follows from a large deviation principle for the random variable $\xi \mapsto Q(\xi)$ along with certain properties of the function $(\bA, q) \mapsto \lambda_n(\bA, q)$, particularly its continuity. An additional difficulty introduced by the presence of $\bA$ is that the eigenvalues lose their invariance in law under translation, as opposed to the Anderson case. We overcome this problem by using Proposition \ref{prop:Landau-Anderson}, which compares the magnetic eigenvalues to the Anderson ones.\\

As the present work can be seen as a continuation of the work \cite{HL23}, which provides a proof of Theorem \ref{thm2} in the Anderson case $\bA = 0$, let us comment on some minor technical improvements. The work \cite{HL23} builds on an earlier result by Labbé \cite{Lab19}, who constructed the operator $H = -\Delta + \xi$ through its resolvent $S_a = (H + a)^{-1}$ for sufficiently large parameters $a > 0$. In that approach, the operator $S_a(q)$ is constructed for $a \ge a_0(q)$, where the threshold $a_0(q)$ depends on the model $q$ (the family $(S_a(q))_a$ can subsequently be extended to all $a > -\lambda_1(q)$ by the resolvent identity). However, since no single parameter $a$ is admissible for all models $q \in \ccM$ simultaneously, for any fixed $a$ the map $q \mapsto S_a(q)$ is only known to be continuous on the subspace $\ccM_a = \{q \in \ccM: a_0(q) \leq a\}$ of models. As a consequence, probabilistic properties of the eigenvalues could only be proved conditionally on the event $E_a = \{Q \in \ccM_a\}$, which is not of full probability; this explains in part why \cite[Proposition 2.6]{HL23} only obtained a partial large deviation estimate for sets of the form $(-\infty, m)$ with $m \in \R$, while a full large deviation principle should also be true. In contrast, the semigroup approach adopted in the present work involves no such threshold: the operator $P_t(q)$ is constructed for every instance $q \in \ccM$, and the map $q \mapsto P_t(q)$ is continuous on all of $\ccM$. Moreover, the operator $P_t(q)$ is positive by construction, so that for each fixed $n$, its $n$-th eigenvalue $\mu_n(q) = e^{-t\lambda_n(q)}$ is strictly positive and, by continuity in $q$, locally bounded away from $0$; the composition with the function $\mu \mapsto -\frac{1}{t}\log(\mu)$, which is continuous on $(0, \infty)$, therefore poses no problem, and each eigenvalue $\lambda_n$ depends continuously on $q \in \ccM$. Consequently, we are able to establish a full large deviation principle for $\lambda_1$ (Proposition \ref{prop:LDP}).\\

One may also wonder why we do not construct $H$ through its Dirichlet form: quadratic forms appear naturally in Section \ref{sec:asymptotics}, and on a bounded box, form constructions of Anderson-type Hamiltonians are indeed available \cite{GUZ20, MvZ25}. Besides the continuity and positivity properties of $q \mapsto P_t(q)$ discussed above, the semigroup construction has the advantage of providing an explicit description of the operator domain. A construction via forms typically proceeds through Friedrichs' extension: one proves a lower bound for the form on a dense class $\ccD$ of test functions and obtains a self-adjoint operator by the representation theorem; the operator domain produced in this way is however characterised only abstractly, and the class $\ccD$ is in general neither a core of the resulting operator nor explicitly related to its form domain (actually, it can happen that $\ccD$ intersects the operator domain only at $\{0\}$!). By contrast, the Hille--Yosida theorem identifies the operator domain explicitly, as the set of functions at which the semigroup is differentiable at $t = 0$.\\

The remainder of the article is organised as follows. In Section \ref{sec:construction}, we discuss the construction of the random operator $H$ and prove Theorem \ref{thm1}. Specifically, we construct the deterministic map $(\bA, q) \mapsto P_t(\bA, q)$ in Section \ref{sec:P}, and the random data $\xi \mapsto Q(\xi)$ in Section \ref{sec:Q}. In Section \ref{sec:asymptotics}, we explore the asymptotics of the eigenvalues of $H$ and prove Theorem \ref{thm2}. In particular, we reduce the assertion to a probabilistic tail bound (Theorem \ref{thm:ev_tail_estimate}), prove the key lemmas in Section \ref{sec:ingredients}, and establish the tail bound in Section \ref{sec:tail_bound}. Finally, Section \ref{sec:outlook} discusses several generalisations of our results and related open questions.

\subsubsection*{Notations}
\begin{itemize}
	\item We will work with bounded open boxes, i.e., sets of the form $B = \prod_{j=1}^{2} (a_j, b_j)$ with $-\infty < a_j < b_j < \infty$. For $x \in \R^2$ and $L > 0$, we will denote by $B_{x, L}$ the open box $\{y \in \R^2 : |y - x|_\infty < L/2\}$.
	\item For $\alpha \in \R \setminus \N$, we denote by $\cC^\alpha_{\mathrm{loc}}(\R^2)$ the family of locally $\alpha$-Hölder functions or distributions on $\R^2$, as defined in \cite{Hai14}. When a bounded open box $B$ is specified, $\cC^\alpha(B)$ denotes the family of $\alpha$-Hölder functions or distributions on $\bar{B}$. When the underlying domain is clear from the context, we may simply write $\cC^\alpha$.
	\item For $\alpha \in \R_+$ and a bounded open box $B$, $\cH^\alpha(B)$ denotes the Sobolev space on $\bar{B}$. When the underlying domain is clear from the context, we may simply write $\cH^\alpha$.
	\item In the sequel, $\kappa$ denotes a fixed constant in the interval $(0, \frac12)$.
	\item Let $(\Omega, \cF, \P)$ with $\Omega = \cC^{-1-\kappa}_{\mathrm{loc}}$ be the canonical probability space for Gaussian white noise $\xi$.
	\item Given a bounded open box $B$, we define the Banach space $\ccM(B) = \cC^{1-\kappa}(B)\times \cC^{-\kappa}(B)$ equipped with the usual norm.
	\item Define $\ccM = \cC^{1-\kappa}_{\mathrm{loc}}(\R^2) \times \cC^{-\kappa}_{\mathrm{loc}}(\R^2)$. Then, $\ccM$ is a Polish space under the topology of compact convergence, that is, the topology induced by the metric
	\[d_\ccM(q_1, q_2) = \sum_{n \ge 1} 2^{-n}\left(1 \wedge \norm{q_1 - q_2}_{\ccM(B_{0, n})}\right), \quad q_1, q_2 \in \ccM.\]
	\item Given a bounded open box $B$, we define the Banach space $\ccA(B) = \cC^{1-\kappa}(B; \R^2)$, the family of $\R^2$-valued $(1-\kappa)$-Hölder functions on $B$, equipped with the usual norm.
	\item Define $\ccA = \cC^{1-\kappa}_{\mathrm{loc}}(\R^2; \R^2)$, the family of $\R^2$-valued locally $(1-\kappa)$-Hölder functions on $\R^2$. Then, $\ccA$ is a Polish space under the topology of compact convergence.
\end{itemize}

\paragraph{Acknowledgements.} The author is funded by the European Union (ERC, SPDE, 101117125). Views and opinions expressed are however those of the author(s) only and do not necessarily reflect those of the European Union or the European Research Council Executive Agency. Neither the European Union nor the granting authority can be held responsible for them. The author would like to thank Cyril Labbé for fruitful discussions on the topic.

\section{Construction of $H$ on bounded open box} \label{sec:construction}
The aim of this section is to prove Theorem \ref{thm1}. In particular, we will make rigorous the scheme presented in Figure \ref{fig:scheme} by constructing the map $P_t$ (Section \ref{sec:P}) and the random variable $Q$ (Section \ref{sec:Q}). In this section, we fix a bounded open box $B$.

Let us motivate our argument. Remember that we would like to solve the initial value problem $\partial_t u = (i\nabla + \bA)^2u + \xi u, \quad u(t = 0, \cdot) = g \in L^2$. The following observation due to \cite{HL15} is the key: if we take the Green function $G$ of the operator $-\Delta$ and make the Ansatz $u = e^{-G\ast\xi} v$, then the function $v$ formally solves
\[\partial_t v =  -(i\nabla + \bA)^2 v - 2\nabla (G\ast\xi) \cdot \nabla v  + 2i\nabla (G\ast\xi) \cdot \bA v  - |\nabla (G\ast\xi)|^2 v.\]
Notice that the white noise $\xi$ disappears from the PDE, and the only singular term remaining is $|\nabla (G\ast\xi)|^2$, which can be dealt with via Wick-renormalisation (Lemma \ref{lem:conv_noise}).

Consequently, the auxiliary space $\ccM$ is devised so as to accommodate the typical realisations of the random fields $G\ast\xi$ and the Wick renormalised $|\nabla (G\ast\xi)|^2$. Then, given each instance $q \in \ccM$, we can solve the PDE deterministically.

\subsection{Construction of $P_t^B$}\label{sec:P}
Fix a bounded open box $B$. Given $q = (h, \zeta) \in \ccM$, $\bA \in \ccA$ and $g \in L^2(B; \C)$, we consider the deterministic PDE
\begin{equ}\label{eq:Landau_PDE}
	\begin{cases}
		[\partial_t + (i\nabla + \bA)^2 + 2\nabla h \cdot \nabla  - 2i\nabla h \cdot \bA  + \zeta] v(t, x) = 0, & t > 0, x\in B,\\
		v(0, x) = g(x) e^{h(x)}, & t = 0, x \in B,\\
		v(t, x) = 0, & t\ge 0, x \in B^c.
	\end{cases}
\end{equ}
In view of the discussion above, the couple $q = (h, \zeta)$ should be thought of as a deterministic surrogate for the renormalised fields $(G \ast \xi, \, -:|\nabla (G\ast\xi)|^2: - F \ast \xi)$ constructed in Section \ref{sec:Q}; in the present subsection, however, no probability is involved and $q$ is a fixed element of $\ccM$.

Throughout this section, we fix the exponents $\theta, \kappa' > 0$ such that
\begin{equ}\label{eq:theta}
	1 + \kappa < \theta < 2 - \kappa - \kappa'\;.
\end{equ}
Note these exponents exist: since $\kappa < \frac12$, one can first choose $\kappa' \in (0, 1 - 2\kappa)$ and then $\theta$ in the non-empty interval $(1 + \kappa, 2 - \kappa - \kappa')$.
The space $\cH^\theta(B)$ will serve as the target space for the solutions of the PDE \eqref{eq:Landau_PDE}.

Let us define the functions $F_1$, $F_2$, taking values in $\cC^{-\kappa}_{\mathrm{loc}}(\R^2;\C^2)$, $\cC^{-\kappa}_{\mathrm{loc}}(\R^2; \C)$, respectively, by
\begin{equs}
	F_1(\bA, h) &:= 2i\bA + 2\nabla h,\\
	F_2(\bA, h, \zeta) &:= i\nabla \cdot \bA - 2i\nabla h \cdot \bA + |\bA|^2 + \zeta.
\end{equs}
The following quantities will appear repeatedly in the sequel
\begin{equs}[eq:C-and-M]
	C^B(\bA, q) := \norm{F_1}_{\cC^{-\kappa}(B)} + \norm{F_2}_{\cC^{-\kappa}(B)}\;, \quad M^B_h := 1 \vee \left(\norm{e^{-h}}_{\cC^{1-\kappa}(B)} \norm{e^h}_{\cC^{1-\kappa}(B)}\right)\;.
\end{equs}
In particular, it is not hard to see that $C^B(\bA, q) \leq (1 + \norm{\bA}_{\ccA} + \norm{q}_\ccM)^2$.
Define also the quantity $t_0 = t_0^B(\bA, q)$ by
\begin{equ}[eq:t0]
	C^B(\bA, q) t_0^{(2-\theta - \kappa - \kappa')/2} = \frac12\;.
\end{equ}

The following proposition shows that the initial value problem \eqref{eq:Landau_PDE} admits a unique global-in-time solution from which we can construct the semigroup.
\begin{proposition}\label{prop:semigroup}
	For $q = (h, \zeta) \in \ccM$, $\bA \in \ccA$ and $g \in L^2(B; \C)$, the initial value problem \eqref{eq:Landau_PDE} admits a unique, global-in-time mild solution $v$. Define, for $t \in (0, \infty)$,
	\begin{equ}\label{eq:semigroup}
		P^B_t(\bA, q): g \mapsto e^{-h} v(t, \cdot),
	\end{equ}
	and $P_0^B(\bA, q) = \mathrm{Id}$ for $t = 0$. Then, the following assertions hold for the map $(t, \bA, q, g) \mapsto P^B_t(\bA, q) g$ defined on $[0, \infty) \times \ccA \times \ccM \times L^2(B)$.
	\begin{enumerate}
		\item For fixed $(\bA, q) \in \ccA \times \ccM$ and $t \ge 0$, the map $g \mapsto P^B_t(\bA, q) g$ defines a bounded, symmetric (hence self-adjoint), compact and positive operator on $L^2(B)$.
		\item For fixed $t \ge 0$, the map $(\bA, q) \mapsto P^B_t(\bA, q)$ is locally Lipschitz-continuous from $\ccA \times \ccM$ to $\cL(L^2(B))$.
		\item For fixed $(\bA, q) \in \ccA \times \ccM$, the map $t \mapsto P^B_t(\bA, q)$ is continuous from $[0, \infty)$ to $\cL(L^2(B))$.
		\item For fixed $(\bA, q) \in \ccA \times \ccM$ and $0 \leq s < t < \infty$, one has the semigroup property $P^B_{t}(\bA, q) = P^B_{t-s}(\bA, q) \circ P^B_{s}(\bA, q)$.
	\end{enumerate}
\end{proposition}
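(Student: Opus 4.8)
The plan is to solve the deterministic problem \eqref{eq:Landau_PDE} by a fixed-point argument in a scale of Hölder spaces with a blow-up at $t=0$, in the spirit of the exponential-transform construction of \cite{HL15}, and then to read off the four assertions from this solution theory together with a smooth-approximation argument. Writing $(i\nabla+\bA)^2 = -\Delta + 2i\bA\cdot\nabla + i(\nabla\cdot\bA) + |\bA|^2$, one sees that \eqref{eq:Landau_PDE} is equivalent to the Duhamel equation
\[ v(t) = e^{t\Delta}\!\big(g\,e^{h}\big) - \int_0^t e^{(t-s)\Delta}\Big(F_1(\bA,h)\cdot\nabla v(s) + F_2(\bA,h,\zeta)\,v(s)\Big)\,\dd s,\]
with $e^{t\Delta}$ the heat semigroup on $B$ with zero Dirichlet boundary condition. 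Since $F_1, F_2 \in \cC^{-\kappa}$ with $\kappa < \tfrac14$, every product here is classical: $F_1\cdot\nabla v$ is a well-defined element of $\cC^{-\kappa}$ whenever $v \in \cC^{\beta}$ with $\beta > 1+\kappa$ (the Hölder exponents $-\kappa$ and $\beta-1$ add to something positive), and likewise for $F_2\,v$; in particular no further renormalisation is needed at this stage, it having been absorbed into the construction of $\zeta$ and hence of $q\in\ccM$.

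The core of the proof is to make sense of, and solve, this equation. Fix $\beta\in(1+\kappa,2-\kappa)$, which is nonempty as $\kappa<\tfrac12$. For initial data in the dense class $g\in\cC^{1-\kappa}(B)$, so that $g\,e^{h}\in\cC^{1-\kappa}$, I would run the contraction in $C\big((0,T];\cC^{\beta}(B)\big)$ with the weight $t^{(\beta-1+\kappa)/2}$ at the origin: the Schauder estimate $\norm{e^{r\Delta}f}_{\cC^{\beta}}\lesssim r^{-(\beta-\alpha)/2}\norm{f}_{\cC^{\alpha}}$ together with $\norm{F_1\cdot\nabla v}_{\cC^{-\kappa}}+\norm{F_2 v}_{\cC^{-\kappa}}\lesssim C^B(\bA,q)\norm{v}_{\cC^{\beta}}$ and the integrability of $r\mapsto r^{-(\beta+\kappa)/2}$ make the Duhamel term contribute a factor $\lesssim C^B(\bA,q)\,T^{1-(\beta+\kappa)/2}$, so the map is a contraction for $T=T\big(C^B(\bA,q)\big)$ small; linearity then upgrades this to a unique global-in-time solution by iteration. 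The remaining, and to my mind the main, difficulty is to upgrade this to arbitrary $g\in L^2(B)$ and, above all, to the bound $\norm{R^B_t(\bA,q)g}_{L^2}\le e^{c\,C^B(\bA,q)\,t}\,M^B_h\,\norm{g}_{L^2}$ needed for $R^B_t$ to define a bounded operator on all of $L^2$; since $g\,e^{h}$ lies only in $L^2$, the $\cC^{\beta}$-norm of $v(t)$ genuinely blows up faster than $t^{-1}$ and the Duhamel integral has to be controlled through the extra smoothing and the precise structure of the $v$-equation, as in \cite{HL15}. A clean route is to prove this $L^2$ bound first for smooth coefficients, where $v$ is classical and the estimate is a standard parabolic energy estimate, and then to pass to the limit; this approximation step will be needed below in any case.

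Granting the solution theory, set $R^B_t(\bA,q)=M_{e^{-h}}\circ S^B_t\circ M_{e^{h}}$, where $S^B_t:g\,e^{h}\mapsto v(t)$ and $M_\phi$ is multiplication by $\phi$. Assertion (4), the semigroup property, is then immediate from uniqueness: $S^B_{t}=S^B_{t-s}\circ S^B_{s}$, and since $M_{e^{h}}M_{e^{-h}}=\mathrm{Id}$ the inner conjugations cancel in $R^B_{t}=M_{e^{-h}}S^B_{t-s}S^B_{s}M_{e^{h}}=R^B_{t-s}R^B_{s}$. For assertion (2), the data $\big(F_1,F_2,g\,e^{h}\big)$ depend locally Lipschitz on $(\bA,h,\zeta)$ (affinely, quadratically, and through $e^{h}$, using that $\phi\mapsto e^{\phi}$ is locally Lipschitz on the algebra $\cC^{1-\kappa}(B)$), while the contraction constant and the time $T$ depend only on an upper bound for $C^B(\bA,q)$; a standard Lipschitz-dependence-of-fixed-points argument, combined with (4) to pass from $[0,T]$ to all of $[0,\infty)$, then gives local Lipschitz continuity of $(\bA,q)\mapsto R^B_t(\bA,q)\in\cL(L^2(B))$. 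For assertion (1): boundedness is the $L^2$ bound above; for symmetry, positivity, and compactness (for $t>0$) I would approximate $(\bA,h,\zeta)$ by smooth $(\bA_n,h_n,\zeta_n)$ converging in $\ccA\times\ccM$ and use the conjugation identity, valid for smooth coefficients,
\[ M_{e^{-h}}\Big(\Delta-F_1(\bA,h)\cdot\nabla-F_2(\bA,h,\zeta)\Big)M_{e^{h}} = -\Big[(i\nabla+\bA)^2+\big(|\nabla h|^2-\Delta h+\zeta\big)\Big],\]
whose right-hand side is minus a magnetic Schrödinger operator $H_n$ with bounded real potential, self-adjoint on $\cH^2(B)\cap\cH^1_0(B)$; because the two conjugating multiplications cancel, $R^B_t(\bA_n,q_n)=e^{-tH_n}$, which is self-adjoint, positive, and (since $H_n$ has compact resolvent) compact for $t>0$. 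Passing to the limit using the continuity from (2) transfers these three properties to $R^B_t(\bA,q)$.

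Finally, assertion (3): for $0<t'\le t$ one has $R^B_t-R^B_{t'}=\big(R^B_{t-t'}-\mathrm{Id}\big)R^B_{t'}$, and $R^B_{t'}$ maps $L^2(B)$ into a space of strictly positive Hölder regularity (a consequence of parabolic smoothing for $t'>0$, again cleanest via the smooth approximants), on which $R^B_{s}\to\mathrm{Id}$ as $s\to0$ because the solution $v$ is continuous up to $t=0$ in a norm weaker than $\cC^{\beta}$; hence $t\mapsto R^B_t$ is norm-continuous on $(0,\infty)$. At $t=0$ one should only expect strong continuity, $R^B_s g\to g$ in $L^2(B)$ for each $g$ — norm continuity there being false for any non-degenerate semigroup — and this follows on the dense class from $v(s)\to g\,e^{h}$ and then on all of $L^2$ from the uniform bound in (1); this is exactly the strong continuity that feeds the semigroup into the Hille–Yosida theorem later. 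In summary, I expect the one genuinely delicate point to be the well-posedness and $L^2$-boundedness for the singular-coefficient equation with merely $L^2$ initial data; everything else is either soft (uniqueness, semigroup property, Lipschitz dependence) or reduces, through the continuity in $(\bA,q)$, to the classical smooth case.
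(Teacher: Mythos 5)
Your overall strategy matches the paper's: solve the conjugated $v$-equation in mild form by a contraction in a space with a time blow-up at $t=0$, read off boundedness, get the semigroup property from uniqueness, and obtain symmetry, positivity and compactness by passing through smooth approximants and the conjugation identity. The decisive divergence, and where you have a gap, is the choice of function space for the fixed point. You run the contraction in $C((0,T];\cC^\beta(B))$ with $\beta>1+\kappa$, so that the products $F_1\cdot\nabla v$ and $F_2 v$ are classical, but this forces the initial data $g\,e^h$ into $\cC^{1-\kappa}$; for $g\in L^2$ the corresponding weight $t^{(1+\beta)/2}$ makes the Duhamel kernel non-integrable, as you yourself observe. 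The paper avoids this entirely by running the contraction in the Sobolev scale: the space $\cE_{t_0}=\{v:\ \sup_{t\le t_0} t^{1-\kappa}\|v(t)\|_{\cH^{2-2\kappa}(B)}<\infty\}$. Since $e^{t\Delta}$ maps $L^2\to\cH^{2-2\kappa}$ with cost $t^{-1+\kappa}$, and the products $F_1\cdot\nabla v$, $F_2 v$ land in $\cH^{-\kappa}$ as soon as $\kappa<1/3$, the Duhamel term contributes a factor $C^B(\bA,q)\,t_0^{\kappa/2}$, the fixed point holds directly for $L^2$ data, and the $L^2$-boundedness of $R^B_t$, and even the sharper bound $\|R^B_t\|_{\cL(L^2;\cH^{1-\kappa})}\lesssim M^B_h\,t^{-1+\kappa}/(1-C^B t^{\kappa/2})$ used later for compactness and for Proposition \ref{prop:Landau-Anderson}, falls out immediately.

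Your proposed fix for the $L^2$ step (prove the $L^2$ bound for smooth coefficients by a ``standard parabolic energy estimate'' and pass to the limit) does not work as written. For smooth data the energy identity for the $v$-equation gives $\tfrac{d}{dt}\|v\|^2 = -2\|\nabla v\|^2 - 2\Re\langle F_1\cdot\nabla v + F_2 v, v\rangle$, and the resulting Gr\"onwall bound involves $\|F_1\|_{L^\infty},\|F_2\|_{L^\infty}$, which diverge as the mollification is removed; equivalently, in the $u$-picture the bound involves $-\lambda_1(H_n)$, and a uniform lower bound on that quantity in terms of $C^B(\bA,q)$ and $M^B_h$ is exactly what is being proved, not something one may assume. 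To make your route non-circular one would need to show that the renormalised potential is relatively form-bounded with respect to $-\Delta$ uniformly in the mollification, which is more work than redoing the fixed point in $\cH^{2-2\kappa}$. Put differently: the Sobolev choice is not a cosmetic variant but the step that makes the $L^2$ theory, and hence assertions (1)--(3) as stated, go through. The rest of your argument — semigroup property from uniqueness, local Lipschitz dependence from a parametrised fixed-point estimate, symmetry/positivity/compactness from the identity $R^B_t(\bA_n,q_n)=e^{-tH_n}$ for smooth data and continuity in the model — coincides in substance with the paper (the paper proves positivity slightly differently, via $\tfrac{\dd}{\dd s}\langle R_s g, R_{t-s}g\rangle=0$, but that is a minor point), and your remark that only strong continuity holds at $t=0$ is the correct reading of item 3.
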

\begin{proof}
	We first argue that a unique local-in-time solution to \eqref{eq:Landau_PDE} exists up to some time horizon $t_0 = t_0(\bA, q)$ depending only on the data $(\bA, q)$, such that all the assertions hold on the time interval $t \in [0, t_0]$. To this end, we reformulate \eqref{eq:Landau_PDE} to the following fixed point problem:
	\begin{equ}\label{eq:Landau_mild}
		v(t) = K^B_t \ast (ge^h) - \int_0^t K^B_{t-s}\ast\left(F_1(\bA, h)\cdot \nabla v(s) + F_2(\bA, h, \zeta) v(s)\right) \dd s,
	\end{equ}
	where we use the shorthand $v(t) = v(t, \cdot)$. Here, $K^B_t$ denotes the Green function of $\partial_t - \Delta$ on $B$ with zero Dirichlet boundary condition, and a mild solution of \eqref{eq:Landau_PDE} is by definition a fixed point of \eqref{eq:Landau_mild}; the uniqueness asserted in the statement is understood in the space $\cE_{t_0}$ defined below, extended to all times by the iteration concluding this proof. Let $\cM(v)$ be given by the right-hand side of \eqref{eq:Landau_mild} and define the Banach space $\cE_{t_0} = \{v = v(t, x): \normm{v}_{t_0} := \sup_{t \in [0, t_0]} t^{\theta/2} \norm{v(t)}_{\cH^{\theta}(B)} < \infty\}$, where $\theta$ is the exponent fixed in \eqref{eq:theta}. Let us estimate the two terms of $\cM(v)$ separately.
	
	For the first term, the heat-kernel smoothing estimate $\norm{K^B_t \ast f}_{\cH^{\theta}} \lesssim t^{-\theta/2} \norm{f}_{L^2}$ yields
	\begin{equ}
		\sup_{t \le t_0} t^{\theta/2}\norm{K^B_t\ast(ge^h)}_{\cH^\theta} \lesssim \norm{g e^h}_{L^2(B)}\;.
	\end{equ}
	For the second term, since $\nabla v(s) \in \cH^{\theta - 1}(B)$ and $F_1, F_2 \in \cC^{-\kappa}$ with $\theta - 1 + (-\kappa) > 0$, one can utilise the well-known continuity of the product map between Sobolev and Hölder spaces \cite{BCD11}
	\begin{equ}
		\cH^{\theta - 1} \times \cC^{-\kappa} \to \cH^{-\kappa - \kappa'}\;,
	\end{equ}
	for all $\kappa' > 0$, giving
	\begin{equ}\label{eq:integrand_bound}
		\norm{F_1 \cdot \nabla v(s) + F_2 v(s)}_{\cH^{-\kappa - \kappa'}} \lesssim C^B(\bA, q) \norm{v(s)}_{\cH^{\theta}(B)} \leq C^B(\bA, q)\, s^{-\theta/2} \normm{v}_{t_0}\;.
	\end{equ}
	Therefore, the smoothing estimates associated to $K^B_t$ imply
	\begin{equ}
		\norm{\int_0^t K^B_{t-s}\ast\left(F_1 \cdot \nabla v(s) + F_2 v(s)\right) \dd s}_{\cH^\theta} \lesssim C^B(\bA, q) \normm{v}_{t_0} \int_0^t (t-s)^{-\frac{\theta+\kappa+\kappa'}{2}} s^{-\frac{\theta}{2}} \dd s\;.
	\end{equ}
	Thanks to our choice of $\theta < 2-\kappa-\kappa'$, the last time integral is convergent and is of order $t^{1 - \theta - \frac{\kappa+\kappa'}{2}}$.
	
	Altogether, for any $t_0 > 0$ and $v_1, v_2 \in \cE_{t_0}$,
	\begin{align}
		&\normm{\cM(v_1)}_{t_0} \lesssim \norm{g e^h}_{L^2(B)} + C^B(\bA, q)\, t_0^{\frac{2-\theta-\kappa-\kappa'}{2}} \normm{v_1}_{t_0}, \label{eq:M-1}\\
		&\normm{\cM(v_1) - \cM(v_2)}_{t_0} \lesssim C^B(\bA, q)\, t_0^{\frac{2-\theta-\kappa-\kappa'}{2}} \normm{v_1 - v_2}_{t_0}. \label{eq:M-2}
	\end{align}
	Therefore, the affine map $\cM$ is well-defined on $\cE_{t_0}$ and, by choosing $t_0$ sufficiently small such that $C^B(\bA, q)\, t_0^{(2-\theta-\kappa-\kappa')/2} < 1$ (note that this choice depends only on $C^B(\bA, q)$, $\theta$, $\kappa$, $\kappa'$, and not on the initial condition $g$), it defines a contraction and thus admits a unique fixed point $v \in \cE_{t_0}$. Moreover, given two data tuples $(g, \bA, h, \zeta)$, $(\bar g, \bar\bA, \bar h, \bar\zeta)$, one has the estimate for their corresponding fixed points $v, \bar v$,
	\begin{equs}
		&\normm{v - \bar v}_{t_0} \lesssim \norm{(g - \bar g) e^h}_{L^2(B)}\\
		&+ t_0^{\frac{2-\theta-\kappa-\kappa'}{2}} \left[\normm{v}_{t_0} \left(\norm{F_1 - \bar F_1}_{\cC^{-\kappa}(B)} + \norm{F_2 - \bar F_2}_{\cC^{-\kappa}(B)}\right) + \normm{v - \bar v}_{t_0} \left(\norm{\bar F_1}_{\cC^{-\kappa}(B)} + \norm{\bar F_2}_{\cC^{-\kappa}(B)}\right)\right],
	\end{equs}
	where $F_j = F_j(\bA, h, \zeta)$ and $\bar F_j = F_j(\bar\bA, \bar h, \bar\zeta)$ for $j = 1, 2$. From the bounds
	\begin{equs}
		\norm{F_1 - \bar F_1}_{\cC^{-\kappa}} &\lesssim \norm{\bA - \bar \bA}_{\cC^{1-\kappa}} + \norm{h - \bar h}_{\cC^{1-\kappa}},\\
		\norm{F_2 - \bar F_2}_{\cC^{-\kappa}} &\lesssim (1 + \norm{h}_{\cC^{1-\kappa}} + \norm{\bA + \bar \bA}_{\cC^{1-\kappa}})\norm{\bA - \bar \bA}_{\cC^{1-\kappa}} \\&+ \norm{\bA}_{\cC^{1-\kappa}} \norm{h - \bar h}_{\cC^{1-\kappa}} + \norm{\zeta - \bar \zeta}_{\cC^{-\kappa}},
	\end{equs}
	(here the product $\nabla h \cdot \bA$, resp. $\nabla \bar h \cdot \bar\bA$, is well-defined in $\cC^{-\kappa}$ by Young's condition $(1-\kappa) + (-\kappa) > 0$, which holds precisely because $\kappa < \frac12$), it follows that, by shrinking $t_0$ so that $C' t_0^{(2-\theta-\kappa-\kappa')/2} < 1$ for some $C' = C'(\bA, h, \zeta, \bar\bA, \bar h, \bar\zeta)$,
	\begin{equ}\label{eq:continuity_fixed-pt}
		\normm{v - \bar v}_{t_0} \lesssim \frac{C'}{1 - C' t_0^{\frac{2-\theta-\kappa-\kappa'}{2}}}\left[\norm{g - \bar g} + t_0^{\frac{2-\theta-\kappa-\kappa'}{2}} \left(\norm{\bA - \bar \bA}_{\cC^{1-\kappa}} + \norm{h - \bar h}_{\cC^{1-\kappa}} + \norm{\zeta - \bar \zeta}_{\cC^{-\kappa}}\right)\right].
	\end{equ}
	
	One thus defines $P^B_t(\bA, h, \zeta) g = e^{-h} v(t)$. Then for all $0 < t \leq t_0$, it is clear from \eqref{eq:M-1} that $g \mapsto P^B_t(\bA, q) g$ is linear and, since multiplication by $e^{-h} \in \cC^{1-\kappa}$ maps $\cH^{\theta}(B)$ into $\cH^{1-\kappa}(B)$ (recall $\theta > 1 + \kappa > 1 - \kappa$, and $\cH^{\theta} \hookrightarrow L^\infty$ as $\theta > 1$),
	\begin{equ}\label{eq:P_bound_sobolev}
		\norm{P^B_t(\bA, q)}_{\cL(L^2;\cH^{1-\kappa})} \lesssim \frac{\norm{e^{-h}}_{\cC^{1-\kappa}(B)}\norm{e^h}_{\cC^{1-\kappa}(B)}}{1- C^B(\bA, q)\, t^{\frac{2-\theta-\kappa-\kappa'}{2}}}\; t^{-\theta/2}\;.
	\end{equ}
	A fortiori, the operator $P^B_t(\bA, q)$ is bounded on $L^2(B)$. The items 2.-4. in the statement now follow easily: Since the constant $C'$ is finite provided that the data $\bA, h, \zeta, \bar\bA, \bar h, \bar\zeta$ are bounded, \eqref{eq:continuity_fixed-pt} implies the local Lipschitz continuity claimed in item 2. The semigroup property (item 4.) is a direct consequence of the mild form \eqref{eq:Landau_mild}. For item 3., it suffices to prove the strong right-continuity of $t \mapsto P^B_t$ at $t = 0$ (since then the full continuity follows from the semigroup property and the boundedness of $P^B_t$). To see this, we write (recall that $v(s) = e^h P_s^B g \in \cH^{\theta}$ and $\int_0^\eps \norm{v(s)}_{\cH^{\theta}} \dd s \lesssim \eps^{1 - \theta/2} \normm{v}_{\cE}$, with $1 - \theta/2 > 0$)
	\[(P_\eps^B - I)g = e^{-h} (K_\eps^B - I)(e^h g) + e^{-h} \int_0^\eps K_{\eps-s}^B \ast [F_1 \cdot \nabla v(s) + F_2 v(s)] \dd s,\]
	where both terms converge in $L^2(B)$ to $0$ as $\eps \to 0$.
	
	It now remains to demonstrate item 1. Since $P^B_t(\bA, h, \zeta)$ takes values in $\cH^{1-\kappa}(B)$, which can be compactly embedded into $L^2(B)$ by Rellich--Kondrachov theorem, it is a compact operator on $L^2(B)$. To see that the operator $P^B_t(\bA, h, \zeta)$ is symmetric and positive, we can suppose without loss of generality that $\bA, h, \zeta$ are smooth by the continuity in data proved in item 2. In this case, the symmetry and positiveness follow from the observation that $s \mapsto \crochet{P^B_{s}(\bA, h, \zeta) g, P^B_{t-s}(\bA, h, \zeta) g}_{L^2}$ is differentiable for $t < t_0$ and
	\[\frac{\dd}{\dd s} \crochet{P^B_{s}(\bA, h, \zeta) g, P^B_{t-s}(\bA, h, \zeta) g}_{L^2} = 0.\]
	
	The above reasoning shows that all assertions in the statement hold up to a time horizon $t_0$. To complete the proof, we still have to extend the result to the whole time interval $[0, \infty)$. This can be achieved by an iterative process. Notice that the time horizon $t_0(\bA, q)$ depends only on the potential data $\bA, q$ but not on the initial condition $g$. One can therefore relaunch the equation at initial data $g' = P^B_{t_0} g$, yielding the solution to \eqref{eq:Landau_PDE} for $t \in [0, 2t_0]$. By iterating this reasoning, we obtain the global solution as well as the semigroup $(P^B_t)_{t \ge 0}$. Moreover, all assertions remain valid on $t \in [0, \infty)$ by construction.
\end{proof}

\begin{corollary}\label{cor:P_t-bounded}
	For $\bA \in \ccA$ and $q = (h, \zeta) \in \ccM$, $(P_t^B(\bA, q))_{t \ge 0}$ defines a strongly continuous semigroup satisfying the bound
	\begin{equ}\label{eq:P_t-bounded}
		\norm{P_t}_{\cL(L^2(B))} \leq M_h^B e^{\omega^B(\bA, q) t},
	\end{equ}
	for all $t \ge 0$, where $M_h^B$ is given by \eqref{eq:C-and-M} and $\omega^B(\bA, q)$ is some positive constant bounded by
	\[\omega^B(\bA, q) \lesssim (\log M_h^B) (1 + \norm{\bA}_{\ccA(B)} + \norm{q}_{\ccM(B)})^a,\]
	for some power $a > 0$.
\end{corollary}
\begin{proof}
	Take the constants $C^B(\bA, q)$ and $t_0$ given by \eqref{eq:C-and-M} and \eqref{eq:t0}. Recall from the bound \eqref{eq:M-1} that for all $t \leq t_0$, the fixed point $v$ for the contraction map $\cM$ satisfies the bound
	\[\normm{v}_{t} \lesssim \frac{\norm{g e^h}_{L^2}}{1 - C^B(\bA, q) t^{\frac{2-\theta-\kappa-\kappa'}{2}}}\;.\]
	With the bound on $\normm{v}_t$ we can estimate the $L^2$-norm of $v(t)$ through \eqref{eq:Landau_mild}: for $t \leq t_0$
	\begin{equs}
		\norm{v(t)}_{L^2(B)} &\leq \norm{g e^h}_{L^2(B)} + \int_0^t (t-s)^{-(\kappa+\kappa')/2} s^{-\theta/2} C(\bA, q) \normm{v}_{t} \dd s \\
		&\lesssim  \norm{g e^h}_{L^2(B)} + C^B(\bA, q) t^{\frac{2-\theta-\kappa-\kappa'}{2}} \normm{v}_{t} \lesssim \frac{\norm{g e^h}}{1 - C^B(\bA, q) t^{\frac{2-\theta-\kappa-\kappa'}{2}}} \leq 2\norm{g e^h}_{L^2(B)}.
	\end{equs}
	This shows that, for $t \leq t_0$, the operator defined by $P_t^B(\bA, q) g = e^{-h} v(t)$ satisfies
	\[\norm{P_t^B}_{\cL(L^2(B))} \lesssim \norm{e^h}_{\cC^{1-\kappa}(B)} \norm{e^{-h}}_{\cC^{1-\kappa}(B)} =: C_h.\]
	Now consider $t \in ((n-1)t_0, nt_0]$ for some integer $n \ge 1$. We can estimate
	\begin{equs}
		\norm{P_t} \leq \norm{P_{t/n}}^n \lesssim C_h^n = C_h^{\ceil{t/t_0}} \leq (C_h \vee 1)^{t/t_0 + 1} = M_h^B e^{\omega^B(\bA, q) t},
	\end{equs}
	where $\omega^B(\bA, q) = t_0^{-1} \log M_h^B$. The bound for $\omega^B(\bA, q)$ then follows from the fact that $t_0^B(\bA, q)^{-1} = [2C^B(\bA, q)]^{2/(2-\theta-\kappa-\kappa')}$ and $C^B(\bA, q) \lesssim (1+ \norm{\bA}_{\ccA(B)} + \norm{q}_{\ccM(B)})^2$.
\end{proof}

Proposition \ref{prop:semigroup} yields the strongly continuous semigroup $(P_t^B)_{t > 0}$ satisfying the bound \eqref{eq:P_t-bounded}. In virtue of the Hille--Yosida theorem \cite[Theorem II.3.8]{EN00}, we give the following definition.
\begin{definition}
	Given an open box $B$, $\bA \in \ccA$ and $q \in \ccM$, the strongly continuous semigroup $(P_t^B(\bA, q))_{t > 0}$ admits a unique generator $\Phi^B(\bA, q)$ with a dense domain $\cD(\Phi^B(\bA, q)) \subset L^2(B)$, defined by
	\begin{equ}
		\Phi^B(\bA, q) f = -\left.\frac{\dd}{\dd t} P_t^B(\bA, q) f\right|_{t = 0}, \quad \cD(\Phi^B(\bA, q)) := \{f\in L^2(B) \text{ for which the derivative exists}\}.
	\end{equ}
	
	In particular, $\Phi^B(\bA, q)$ is self-adjoint and its spectrum is contained in $(-\omega^B(\bA, q), \infty)$ with $\omega^B(\bA, q)$ given in Corollary \ref{cor:P_t-bounded}.
\end{definition}

\begin{corollary}\label{cor:norm-resolvent_conv}
	The map $\Phi^B: (\bA, q) \mapsto \Phi^B(\bA, q)$ is continuous in the sense that if $\bA_n \to \bA$ in $\ccA$ and $q_n \to q$ in $\ccM$, then $\Phi^B(\bA_n, q_n)$ converges to $\Phi^B(\bA, q)$ in norm-resolvent sense. Furthermore, if $\bA$, $h$ and $\zeta$ are smooth functions, then
	\[\Phi^B(\bA, h, \zeta) = (i\nabla + \bA)^2 + |\nabla h|^2 - \Delta h + \zeta.\]
\end{corollary}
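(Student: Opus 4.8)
The plan is to prove the two assertions separately: the norm-resolvent continuity via the Laplace representation of the resolvent, and the smooth-coefficient identity via a direct change of variables. For the \emph{continuity}, recall from Corollary~\ref{cor:R_t-bounded} that $(R_t^B(\bA,q))_{t\ge0}$ is strongly continuous with $\norm{R_t^B(\bA,q)}_{\cL(L^2(B))}\le M_h^B e^{\omega^B(\bA,q)t}$, so by standard $C_0$-semigroup theory (see, e.g., \cite{EN00}), for every real $\lambda>\omega^B(\bA,q)$,
\[(\Phi^B(\bA,q)+\lambda)^{-1}=\int_0^\infty e^{-\lambda t}\,R_t^B(\bA,q)\,\dd t,\]
the integral converging in $\cL(L^2(B))$. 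Suppose $\bA_n\to\bA$ in $\ccA$ and $q_n=(h_n,\zeta_n)\to q=(h,\zeta)$ in $\ccM$; then the restrictions converge, hence are bounded, in $\ccA(B)$ and $\ccM(B)$, so by the bounds in Corollary~\ref{cor:R_t-bounded} the quantities $\Lambda:=\big(\sup_n\omega^B(\bA_n,q_n)\big)\vee\omega^B(\bA,q)$ and $M:=\big(\sup_n M_{h_n}^B\big)\vee M_h^B$ are finite. Fixing any real $\lambda>\Lambda$, one then writes
\[\norm{(\Phi^B(\bA_n,q_n)+\lambda)^{-1}-(\Phi^B(\bA,q)+\lambda)^{-1}}\le\int_0^\infty e^{-\lambda t}\,\norm{R_t^B(\bA_n,q_n)-R_t^B(\bA,q)}\,\dd t,\]
observes that for each fixed $t$ the integrand tends to $0$ by the local Lipschitz continuity of item~2 of Proposition~\ref{prop:semigroup}, and that it is dominated by the integrable function $2Me^{-(\lambda-\Lambda)t}$; dominated convergence then yields the claimed norm-resolvent convergence (at one, hence at every, point of the common resolvent set).

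For the \emph{smooth-coefficient identification}, set $\cH:=(i\nabla+\bA)^2+|\nabla h|^2-\Delta h+\zeta$. Since $\bA$ is smooth and $W:=|\nabla h|^2-\Delta h+\zeta$ is a smooth real potential bounded on $\bar B$, the operator $\cH$ with zero Dirichlet boundary condition is self-adjoint, bounded below, with compact resolvent, and generates the analytic semigroup $(e^{-t\cH})_{t\ge0}$. The crux is the substitution $v=e^h u$ in \eqref{eq:Landau_PDE}: using $\nabla v=e^h(\nabla u+u\nabla h)$ and $\Delta v=e^h(\Delta u+2\nabla h\cdot\nabla u+(|\nabla h|^2+\Delta h)u)$, one finds that the transport term $2\nabla h\cdot\nabla u$ and the zeroth-order cross term $2iu\,\bA\cdot\nabla h$ cancel exactly, so that $u=e^{-h}v=R_t^B(\bA,h,\zeta)g$ solves
\[\partial_t u+\cH u=0,\qquad u(0)=g,\qquad u|_{\partial B}=0.\]
For $g\in C_c^\infty(B)$ parabolic regularity makes the fixed point $v$ of \eqref{eq:Landau_mild} smooth on $[0,T]\times\bar B$ (the boundary compatibility conditions all holding since $g$ vanishes near $\partial B$), so this computation is classical and, by uniqueness for the well-posed problem above, $R_t^B(\bA,h,\zeta)g=e^{-t\cH}g$; since $C_c^\infty(B)$ is dense in $L^2(B)$ and both operators are bounded, $R_t^B(\bA,h,\zeta)=e^{-t\cH}$ on $L^2(B)$, and uniqueness of the generator of a $C_0$-semigroup gives $\Phi^B(\bA,h,\zeta)=\cH$.

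I expect the first part to be essentially soft once the uniform-in-$n$ bounds on $\omega^B$ and $M_h^B$ are in place — the only point requiring attention is that $\lambda$ and the dominating exponential be chosen uniformly along the convergent sequence, which is exactly what those bounds provide. The genuine subtlety is in the second part: the regularity of $v$ up to $t=0$ and up to $\partial B$ needed to legitimise the pointwise change of variables. I circumvent this by first carrying out the computation for smooth, compactly supported $g$ and then passing to the limit by density; an alternative that bypasses regularity theory altogether is to compare the mild formulation \eqref{eq:Landau_mild} for $v$ with Duhamel's formula for $e^{-t\cH}$ directly, but the density route is shorter.
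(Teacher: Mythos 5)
Your proof is correct and follows the same two-step structure as the paper: norm-resolvent continuity from item~2 of Proposition~\ref{prop:semigroup}, and identification in the smooth case via the exponential substitution $v = e^h u$. For the continuity part, the paper simply declares it a ``direct consequence'' of item~2; your argument via the Laplace representation $(\Phi^B+\lambda)^{-1}=\int_0^\infty e^{-\lambda t}R_t^B\,\dd t$, the uniform-in-$n$ constants $\Lambda$ and $M$ from Corollary~\ref{cor:R_t-bounded}, and dominated convergence is the standard way to make that rigorous, and you correctly note the restriction to $\ccA(B)\times\ccM(B)$ to get the uniform bounds. For the smooth-coefficient identity, your cancellation computation is right, but you apply uniqueness in the opposite direction from the paper: you argue that $R_t^B g$ is a classical solution of $\partial_t u + \cH u = 0$ (which forces you to invoke parabolic regularity for the fixed point $v$ of~\eqref{eq:Landau_mild}), whereas the paper checks that the explicitly known function $e^h e^{-tS}g$ satisfies the mild formulation~\eqref{eq:Landau_mild} and lies in $\cE_{t_0}$, so the fixed-point uniqueness of Proposition~\ref{prop:semigroup} gives $v=e^h e^{-tS}g$ directly, with no regularity theory needed. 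You flag this alternative yourself at the end; it is indeed the shorter route and the one the paper takes. Both directions are sound; the paper's avoids the extra bootstrap.
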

\begin{proof}
	The norm-resolvent convergence is a direct consequence of Proposition \ref{prop:semigroup} item 2. Provided that $\bA$, $h$ and $\zeta$ are smooth functions on $B$, it is well-known by classical argument that $S = (i\nabla + \bA)^2 + |\nabla h|^2 - \Delta h + \zeta$ defines a self-adjoint unbounded operator on $L^2(B)$. Moreover, it generates a semigroup $(e^h e^{-tS})_{t \ge 0}$ which solves the initial value problem \eqref{eq:Landau_PDE} for every $g \in L^2$. By uniqueness of Proposition \ref{prop:semigroup} and of the generator, we deduce that $\Phi^B(\bA, h, \zeta) = S$.
\end{proof}

Since the operator $\Phi^B(\bA, q)$ has compact resolvent, it admits isolated eigenvalues with finite multiplicity, accumulating only at infinity, which we enumerate by
\[\lambda_1^B(\bA, q) \leq \lambda_2^B(\bA, q) \leq \lambda_3^B(\bA, q) \leq \cdots.\]

\begin{corollary}\label{cor:ev_continuity}
	For all $n \ge 1$, the eigenvalue $\lambda^B_n = \lambda^B_n(\bA, q)$ of $\Phi^B(\bA, q)$ is a locally Lipschitz function of $(\bA, q) \in \ccM$. Namely, for all $R>0$ and $\bA, q, \bar\bA, \bar q$ with norms bounded by $R$, one has
	\[|\lambda^B_n(\bA, q) - \lambda^B_n(\bar\bA, \bar q)| \leq C_R \left(\norm{q - \bar q}_{\ccM(B)} + \norm{\bA - \bar \bA}_{\ccA(B)}\right),\]
	for some $C_R > 0$.
\end{corollary}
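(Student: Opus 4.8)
The plan is to transfer the local Lipschitz continuity of the semigroup map $(\bA,q)\mapsto R_1^B(\bA,q)$ from Proposition~\ref{prop:semigroup}(2) first to the eigenvalues of $R_1^B$, and then to those of $\Phi^B$ via a logarithm. Write $\mu_n(T)$ for the $n$-th largest eigenvalue (with multiplicity) of a compact positive self-adjoint operator $T$. Since $R_t^B(\bA,q)$ is the $C_0$-semigroup generated by $-\Phi^B(\bA,q)$ and $\Phi^B(\bA,q)$ is self-adjoint and bounded below, uniqueness of generators and the spectral theorem give $R_t^B(\bA,q)=e^{-t\Phi^B(\bA,q)}$; in particular $\mu_n(R_t^B(\bA,q))=e^{-t\lambda_n^B(\bA,q)}$. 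By the min--max characterisation of eigenvalues (Weyl's perturbation inequality),
\[
\bigl|\mu_n(R_1^B(\bA,q))-\mu_n(R_1^B(\bar\bA,\bar q))\bigr|\le\norm{R_1^B(\bA,q)-R_1^B(\bar\bA,\bar q)}_{\cL(L^2(B))},
\]
and by Proposition~\ref{prop:semigroup}(2) the right-hand side is $\le L_R\bigl(\norm{q-\bar q}_{\ccM(B)}+\norm{\bA-\bar\bA}_{\ccA(B)}\bigr)$ once the four data have $\ccA(B)$- and $\ccM(B)$-norms bounded by $R$. Taking logarithms,
\[
|\lambda_n^B(\bA,q)-\lambda_n^B(\bar\bA,\bar q)|\le\frac{|\mu_n(R_1^B(\bA,q))-\mu_n(R_1^B(\bar\bA,\bar q))|}{\min\bigl(\mu_n(R_1^B(\bA,q)),\,\mu_n(R_1^B(\bar\bA,\bar q))\bigr)},
\]
so the whole statement reduces to a uniform lower bound $\mu_n(R_1^B(\bA,q))\ge c_{R,n}>0$ on the ball $\{\norm{\bA}_{\ccA(B)}\le R,\ \norm{q}_{\ccM(B)}\le R\}$, equivalently to the a priori upper bound $\lambda_n^B(\bA,q)\le\Lambda_{R,n}$ there.

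This a priori bound is the one point that needs work, and I would get it from a short-time analysis of the semigroup combined with min--max. Fix $n$ and choose once and for all an $n$-dimensional subspace $V_0\subset C^\infty_c(B)$. Running the argument for strong right-continuity from the proof of Proposition~\ref{prop:semigroup}(3), but now tracking the dependence on the data, one obtains constants $\delta>0$, $t_0(R)>0$ and $C(R,n)<\infty$ with
\[
\norm{\bigl(R_\eps^B(\bA,q)-\mathrm{Id}\bigr)f}_{L^2(B)}\le C(R,n)\,\eps^{\delta}\qquad\text{for all }0<\eps\le t_0(R),
\]
uniformly over $(\bA,q)$ in the ball of radius $R$ and over $f\in V_0$ with $\norm{f}_{L^2}=1$. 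The uniformity rests on the facts that $C^B(\bA,q)\lesssim(1+\norm{\bA}_{\ccA(B)}+\norm{q}_{\ccM(B)})^2$ is bounded by a constant depending only on $R$, that the contraction time $t_0(\bA,q)=[2C^B(\bA,q)]^{-2/\kappa}$ is therefore bounded below by some $t_0(R)>0$, that $\norm{e^{\pm h}}_{\cC^{1-\kappa}(B)}$ is controlled in terms of $R$, and that $f\in V_0\subset C^\infty_c(B)$ keeps all Sobolev norms of $fe^h$ (and the action of the Dirichlet heat kernel $K^B$) under control in terms of $R$ and $n$. Picking $\eps_{R,n}\le t_0(R)$ with $C(R,n)\eps_{R,n}^{\delta}\le\tfrac12$ gives $\norm{R_{\eps_{R,n}}^B(\bA,q)f}_{L^2}\ge\tfrac12$; then the semigroup property and self-adjointness yield $\crochet{R_{2\eps_{R,n}}^B(\bA,q)f,f}_{L^2}=\norm{R_{\eps_{R,n}}^B(\bA,q)f}_{L^2}^2\ge\tfrac14$ for every unit $f\in V_0$, and the min--max principle gives
\[
e^{-2\eps_{R,n}\lambda_n^B(\bA,q)}=\mu_n\bigl(R_{2\eps_{R,n}}^B(\bA,q)\bigr)\ge\min_{f\in V_0,\ \norm{f}=1}\crochet{R_{2\eps_{R,n}}^B(\bA,q)f,f}_{L^2}\ge\tfrac14,
\]
i.e.\ $\lambda_n^B(\bA,q)\le\Lambda_{R,n}:=(2\eps_{R,n})^{-1}\log 4$ on the ball of radius $R$.

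Combining the two pieces, with $c_{R,n}:=e^{-\Lambda_{R,n}}>0$ one has $\mu_n(R_1^B(\bA,q)),\mu_n(R_1^B(\bar\bA,\bar q))\ge c_{R,n}$ whenever the data lie in the ball of radius $R$, whence
\[
|\lambda_n^B(\bA,q)-\lambda_n^B(\bar\bA,\bar q)|\le c_{R,n}^{-1}L_R\bigl(\norm{q-\bar q}_{\ccM(B)}+\norm{\bA-\bar\bA}_{\ccA(B)}\bigr),
\]
which is the assertion with $C_R:=c_{R,n}^{-1}L_R$ (allowed, as the statement permits, to depend also on $n$).

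The main obstacle I anticipate is precisely the uniform short-time estimate: one must verify that every constant entering the mild bounds around \eqref{eq:Landau_mild} --- the contraction time, the factors $e^{\pm h}$, and the resulting bound on $\normm{v}$ --- depends only on the norm bound $R$ (and on $n$ through the fixed space $V_0$), and never on the individual instance $(\bA,q)$. All the required estimates already appear in the proofs of Proposition~\ref{prop:semigroup} and Corollary~\ref{cor:R_t-bounded}; the task is to reassemble them with this uniformity in mind. The remaining ingredients --- Weyl's inequality, the identity $R_1^B=e^{-\Phi^B}$, and local Lipschitz continuity of $\log$ away from $0$ --- are entirely standard. A less self-contained alternative for the a priori bound would test the quadratic form of $\Phi^B$, explicit for smooth $\bA,h,\zeta$ by Corollary~\ref{cor:norm-resolvent_conv}, against $V_0$ and then invoke density of smooth data together with continuity of $\lambda_n^B$.
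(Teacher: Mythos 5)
Your proposal takes the same route as the paper: pass from $\lambda_n^B$ to the eigenvalues $\mu_n$ of the semigroup via $\lambda_n^B=-\frac{1}{t}\log\mu_n(R_t^B)$, transfer the operator-norm Lipschitz bound of Proposition~\ref{prop:semigroup}(2) to $\mu_n$ by min--max (what you call Weyl's perturbation inequality, the paper writes the $\sup_V\inf_g$ formula directly), and then compose with $-\log$. The one place where you do genuinely more work than the paper's written argument is the uniform lower bound $\mu_n(R_t^B(\bA,q))\ge c_{R,n}>0$ over the ball $\{\norm{\bA}_{\ccA(B)}\le R,\ \norm{q}_{\ccM(B)}\le R\}$. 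The paper dispatches this by saying that $-\frac1t\log$ is ``locally Lipschitz from $\R_+$'', but since such balls in $\ccA(B)\times\ccM(B)$ are not compact, deducing a Lipschitz constant $C_R$ on the whole ball really does require knowing $\mu_n$ stays away from zero there; this is precisely the a priori upper bound $\lambda_n^B\le\Lambda_{R,n}$ you establish. Your short-time argument for it is correct: test $R_{2\eps}^B$ against a fixed $n$-dimensional $V_0\subset C^\infty_c(B)$, use the mild formulation \eqref{eq:Landau_mild} to get a uniform decay $\norm{(R_\eps^B-\mathrm{Id})f}_{L^2}\lesssim_{R,n}\eps^{\delta}$ (the needed uniformities over the ball, namely for $C^B(\bA,q)$, $t_0(\bA,q)^{-1}$ and $\norm{e^{\pm h}}_{\cC^{1-\kappa}(B)}$, do hold and are exactly the ones used in Corollary~\ref{cor:R_t-bounded}), then conclude via $\crochet{R_{2\eps}^Bf,f}=\norm{R_\eps^Bf}_{L^2}^2$ and min--max. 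So the proposal is correct and, on this quantitative point, slightly more complete than the proof as written in the paper.
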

\begin{proof}
	Denote by $\mu^B_1(\bA, q) \ge \mu^B_2(\bA, q) \ge \mu^B_3(\bA, q) \ge \cdots$ the eigenvalues of $P^B_t(\bA, q)$ in non-increasing order. By spectral calculus, one has the correspondence
	\[\lambda^B_n(\bA, q) = -\frac1{t} \log (\mu^B_n(\bA, q)).\]
	Since the map $x \mapsto -\frac1{t} \log(x)$ is locally Lipschitz from $\R_+$ to $\R$, it suffices to prove that $(\bA, q) \mapsto \mu^B_n(\bA, q)$ is locally Lipschitz from $\ccM$ to $\R_+$. This however follows from the fact that $P^B_t(\bA, q)$ is positive, Proposition \ref{prop:semigroup} item 2. (in particular, \eqref{eq:continuity_fixed-pt}) and the min-max formula
	\[\mu^B_n(\bA, q) = \sup_{V} \inf_{g \in V: \norm{g}_{L^2} = 1}\crochet{P^B_t(\bA, q) g, g},\]
	where the supremum runs over all vector subspace $V$ of $L^2(B)$ with dimension $n$. Indeed, writing $f(\bA, q, g) := \crochet{P^B_t(\bA, q) g, g}$, one deduces from \eqref{eq:continuity_fixed-pt} that for every $R>0$, there exists a constant $C_R$ such that for all $\bA, q, g, \bar\bA, \bar q, \bar g$ whose norms are bounded by $R$,
	\[|f(\bA, q, g) - f(\bar\bA, \bar q, \bar g)| \leq C_R(\norm{g - \bar g}_{L^2} + \norm{q - \bar q}_{\ccM} + \norm{\bA - \bar \bA}_{\ccA}).\]
	It is then elementary to see $(\bA, q) \mapsto \sup_V \inf_{g \in V: \norm{g} = 1} f(\bA, q, g)$ is locally Lipschitz.
\end{proof}

\begin{corollary}\label{cor:regularity_semigroup}
	Let $\bA \in \ccA$, $q = (h, \zeta) \in \ccM$ and $t > 0$. Then the operator $P^B_t(\bA, q)$ maps $L^2(B)$ continuously into the Hölder space $\cC^{\theta - 1}(B)$, as well as into the operator domain $\cD(\Phi^B(\bA, q))$ (and a fortiori into the form domain of $\Phi^B(\bA, q)$). In particular, the eigenfunctions of $\Phi^B(\bA, q)$ belong to $\cC^{\theta - 1}(B) \cap \cD(\Phi^B(\bA, q))$.
\end{corollary}
\begin{proof}
	For the first assertion, recall from the proof of Proposition \ref{prop:semigroup} that $P^B_t(\bA, q) g = e^{-h} v(t)$ with $v(t) \in \cH^{\theta}(B)$. In dimension two one has the embedding $\cH^{\theta}(B) \hookrightarrow \cC^{\theta - 1}(B)$, and multiplication by $e^{-h} \in \cC^{1-\kappa}(B)$ preserves $\cC^{\theta - 1}(B)$ since $0 < \theta - 1 < 1 - \kappa$; the resulting operator bound is of the same form as \eqref{eq:P_bound_sobolev}. For the second assertion, since $\Phi^B(\bA, q)$ is self-adjoint with spectrum contained in $(-\omega^B(\bA, q), \infty)$, the spectral calculus yields
	\[\norm{\Phi^B(\bA, q) P^B_t(\bA, q)}_{\cL(L^2(B))} \leq \sup_{\lambda > -\omega^B(\bA, q)} |\lambda| e^{-t\lambda} < \infty\]
	for every $t > 0$.
\end{proof}
\noindent Note that, when $q$ is taken to be the random model $Q(\xi)$ constructed in Section \ref{sec:Q} below, the domain $\cD(\Phi^B(\bA, Q))$ and the form domain are random subspaces of $L^2(B)$, into which the semigroup thus regularises instantaneously.

\subsection{Construction of $Q$}\label{sec:Q}
To prove Theorem \ref{thm1}, we are now left to construct a sequence of random elements $Q^{(\eps)}(\xi) = (\rh^{(\eps)}(\xi), \rZ^{(\eps)}(\xi))$ of $\ccM$ with the properties that $\Phi(Q^{(\eps)}) = (i\nabla + \bA)^2 + \xi^{(\eps)} + C^{(\eps)}$ and that $Q^{(\eps)}$ converges to a non-trivial random variable $Q$ in $\ccM$ and in probability.\\

We proceed as follows. Let $\chi$ be a smooth, radially symmetric cut-off function supported in the ball $\{x \in \R^2: |x| \leq 1/4\}$ and such that $\chi(x) = 1$ on $\{x \in \R^2: |x| \leq 1/8\}$. Define
\begin{equ}
	G(x) = -\frac{\log|x|}{2\pi} \chi(x)\;.
\end{equ}
Note that there exists a smooth function $F$ supported in $\{x \in \R^2: |x| \leq 1/4\}$ such that
\begin{equ}
	-\Delta G = \delta + F\;.
\end{equ}

Let $\rho_\eps = \eps^{-2}\rho(\cdot/\eps)$ be a sequence of radially symmetric mollifiers and set $\xi^{(\eps)} = \xi \ast \rho_\eps$. Define
\begin{equ}\label{eq:model}
	\rh^{(\eps)} = G \ast \xi^{(\eps)}, \quad \rZ^{(\eps)} = C^{(\eps)} - |\nabla \rh^{(\eps)}|^2 - F\ast\xi^{(\eps)},
\end{equ}
where $C^{(\eps)}$ is chosen to be
\begin{equ}\label{eq:C}
	C^{(\eps)} = \E |\nabla \rh^{(\eps)} (0)|^2 = \int_{\R^2} |\nabla G\ast\rho_\eps(- y)|^2 \dd y.
\end{equ}

The following result is well-known.
\begin{lemma}[Corollary 1.2 and Lemma 1.3 of \cite{HL15}]\label{lem:conv_noise}
	For the choice of $Q^{(\eps)}$ defined in \eqref{eq:model}, there exist a random variable $Q = (\rh, \rZ)$ taking values in $\ccM$ such that
	\[d_{\ccM}(Q^{(\eps)}, Q) \xrightarrow[\eps \to 0]{} 0 \quad \text{in probability.}\]
	The random function/distribution $\rh$ and $\rZ$ are given by $\rh = G\ast\xi$ and $\rZ = -:|\nabla \rh|^2: - F\ast\xi$, where $:|\nabla \rh|^2:$ is the Wick-square of the distribution $\nabla \rh$; more precisely, for any test function $\eta$, $\crochet{:|\nabla \rh|^2:, \eta}$ is represented by the second-order homogeneous Wiener chaos associated to the kernel
	\[(z, \bar{z}) \mapsto \int \nabla G(z - x) \cdot \nabla G(\bar{z} - x) \eta(x)\dd x. \]
\end{lemma}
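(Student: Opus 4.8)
The plan is to prove convergence of the two components of $Q^{(\eps)} = (\rh^{(\eps)}, Z^{(\eps)})$ separately, reducing in both cases to standard Gaussian estimates for random fields lying in a fixed Wiener chaos; this is the argument of \cite[Corollary 1.2 and Lemma 1.3]{HL15}, which I now outline. For the component $\rh^{(\eps)}$, I would first invoke the classical fact that $\xi^{(\eps)} = \rho_\eps * \xi$ converges to $\xi$ in $\cC^{-1-\kappa}_{\mathrm{loc}}(\R^2)$ in probability: testing against Littlewood--Paley blocks and using that $\xi$ is a white noise gives, for each $m$ and each $p \ge 1$, $\E\norm{\xi^{(\eps)} - \xi}^{2p}_{\cC^{-1-\kappa}(B_{0,m})} \to 0$, after the usual Besov embedding. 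Then, since $G$ is the truncated Green function ($-\Delta G = \delta + F$ with $F$ smooth and compactly supported), it differs locally from the fundamental solution of $-\Delta$ by a smooth function, so convolution with $G$ is a bounded --- hence continuous --- linear map from $\cC^{-1-\kappa}_{\mathrm{loc}}$ to $\cC^{1-\kappa}_{\mathrm{loc}}$ by the Schauder estimate. Composing, $\rh^{(\eps)} = G * \xi^{(\eps)} \to G * \xi =: \rh$ in $\cC^{1-\kappa}_{\mathrm{loc}}$ in probability.

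For the component $Z^{(\eps)}$, note first that $F * \xi^{(\eps)} = F*\rho_\eps*\xi \to F*\xi$ in every $\cC^\alpha_{\mathrm{loc}}$ trivially since $F$ is smooth and compactly supported, so the whole task reduces to showing that the renormalised square $\theta^{(\eps)} := |\nabla\rh^{(\eps)}|^2 - C^{(\eps)}$ converges, in probability in $\cC^{-\kappa}_{\mathrm{loc}}$, to the Wick square $:|\nabla\rh|^2:$ described in the statement. I would write $\nabla\rh^{(\eps)}(x) = \big((\nabla G * \rho_\eps) * \xi\big)(x)$, a centred vector-valued Gaussian field whose pointwise variance is, by translation invariance, independent of $x$ and equal to exactly the constant $C^{(\eps)}$ of \eqref{eq:C}; hence $\theta^{(\eps)}(x)$ is, for each $x$, the Wick square of $\nabla\rh^{(\eps)}(x)$, and testing against $\eta \in C^\infty_c$ one gets that $\crochet{\theta^{(\eps)}, \eta}$ equals a second-order Wiener--It\^o integral with symmetrised kernel $\mathcal{K}^{(\eps)}_\eta(y,y') = \sum_{j=1}^2 \int (\partial_j G*\rho_\eps)(x-y)(\partial_j G*\rho_\eps)(x-y')\eta(x)\dd x$. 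By the It\^o isometry, $\E\,\crochet{\theta^{(\eps)},\eta}^2 = 2\norm{\mathcal{K}^{(\eps)}_\eta}_{L^2(\R^2\times\R^2)}^2$, and since in dimension two $\nabla G$ is homogeneous of degree $-1$ near the origin --- whence $|\cdot|^{-1}*|\cdot|^{-1}$ is only logarithmically singular --- one checks that $\mathcal{K}^{(\eps)}_\eta \to \mathcal{K}_\eta$ in $L^2(\R^2\times\R^2)$, with $\mathcal{K}_\eta$ the symmetrisation of the kernel $(z, \bar z) \mapsto \int \nabla G(z-x)\cdot\nabla G(\bar z - x)\eta(x)\dd x$ from the statement. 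This already yields $\crochet{\theta^{(\eps)},\eta} \to \crochet{:|\nabla\rh|^2:,\eta}$ in $L^2(\Omega)$.

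To pass from these pointwise-in-$\eta$ statements to convergence in $\cC^{-\kappa}_{\mathrm{loc}}$, I would apply the identity above to $L^1$-normalised rescalings $\mathcal{S}^\lambda_x\psi$ at scale $\lambda$: the same logarithmic bound gives $\E\,\crochet{\theta^{(\eps)}, \mathcal{S}^\lambda_x\psi}^2 \lesssim |\log\lambda|^2 \lesssim \lambda^{-2\kappa}$ uniformly in $\eps \in (0,1]$, in $x$ over compacts, and in $\lambda \in (0,1]$; since $\theta^{(\eps)}$ lies in the fixed second Wiener chaos, Gaussian hypercontractivity promotes this to a bound on all moments, and a Kolmogorov-type criterion for Besov/H\"older spaces (see, e.g., \cite{Hai14}) gives $\sup_{\eps\in(0,1]}\E\norm{\theta^{(\eps)}}^p_{\cC^{-\kappa}(B_{0,m})} < \infty$ for each $m$, so that $\{\theta^{(\eps)}\}$ is tight in $\cC^{-\kappa}_{\mathrm{loc}}$. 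Tightness together with the $L^2(\Omega)$-convergence of the evaluations forces $\theta^{(\eps)} \to :|\nabla\rh|^2:$ in probability in $\cC^{-\kappa}_{\mathrm{loc}}$; collecting the two components and using $Z^{(\eps)} = C^{(\eps)} - |\nabla\rh^{(\eps)}|^2 - F*\xi^{(\eps)}$ then gives $d_\ccM(Q^{(\eps)}, Q) \to 0$ in probability with $Q = (\rh, \rZ)$ and $\rZ = -:|\nabla\rh|^2: - F*\xi$. The main obstacle is the $L^2$ kernel estimate with the correct scaling in $\lambda$ and uniformity in $\eps$: everything hinges on the fact that, in $d = 2$, the size $|\cdot|^{-1}$ of $\nabla G$ convolved with itself is only logarithmically singular, so that once the renormalisation removes the would-be diagonal divergence --- precisely the role of subtracting $C^{(\eps)}$ --- all the relevant integrals close within the exponents tolerated by $\cC^{-\kappa}$. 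Alternatively, one may simply invoke \cite[Corollary 1.2 and Lemma 1.3]{HL15} verbatim.
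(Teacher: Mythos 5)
Your proposal is correct and reconstructs, in outline, exactly the standard Wiener-chaos argument that underlies the cited result: the paper itself offers no proof and simply invokes \cite[Corollary~1.2 and Lemma~1.3]{HL15}, which is the alternative you note at the end. Your sketch — Schauder for the linear component $\rh^{(\eps)}$, Itô isometry plus the logarithmic $|\cdot|^{-1} * |\cdot|^{-1}$ estimate for the Wick square, hypercontractivity and a Kolmogorov criterion to upgrade second-moment bounds on rescaled test functions to $\cC^{-\kappa}_{\mathrm{loc}}$ convergence — is precisely the content of that reference, so the two routes agree.
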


\begin{proof}[Proof of Theorem \ref{thm1}.]
	With $Q^{(\eps)} = (\rh^{(\eps)}, \rZ^{(\eps)})$ as in \eqref{eq:model}, one has
	\[H_\eps = \Phi^B(\bA, Q^{(\eps)}) = (i\nabla + \bA)^2 + \xi^{(\eps)} + C^{(\eps)},\]
	for any $\bA \in \ccA$. By Lemma \ref{lem:conv_noise} and Corollary \ref{cor:norm-resolvent_conv}, it follows immediately that $H_\eps$ converges to the operator $\Phi^B(\bA, Q)$ in norm-resolvent sense and in probability. The self-adjointness, the compactness of its resolvent and the fact that it satisfies zero Dirichlet boundary condition follow easily from the construction of $\Phi^B$. The proof is thus completed.
\end{proof}

For later convenience, let us record a simple observation on an independence property that follows from the construction of $Q = (\rh, \rZ)$.
\begin{lemma}\label{lem:independence}
	Let $(\varphi_j)_{j = 1, \dots, n}$ be $n$ test functions in $C^\infty_c$, each supported in an open boxes $B_j$. Suppose it holds that
	\begin{equ}
		\inf\left\{|x - y|: x \in B_i, y \in B_j\right\} > 1\;, \quad \forall i \neq j\;.
	\end{equ}
	Then the random variables $(\rh(\varphi_j), \rZ(\varphi_j))_{j = 1, \dots, n}$ are independent.
	
	Consequently, the random operators $(\Phi^{B_j}(\bA, Q))_{j = 1, \dots, n}$ are independent in the sense that their associated eigenvalues are independent.
\end{lemma}
\begin{proof}
	Recall that the random distributions $\rh, \rZ$ are defined as the limit in probability of \eqref{eq:model}. Note that for each $j$,
	\begin{equ}
		\rh^{(\eps)}(\varphi_j) = \xi (G\ast \rho_\eps \ast \varphi_j), \quad F\ast \xi^{(\eps)}(\varphi_j) = \xi (F \ast \rho_\eps \ast \varphi_j)\;.
	\end{equ}
	Since both functions $G$ and $F$ are supported in a ball of radius $1/4$, the functions $G\ast \rho_\eps \ast \varphi_j$ and $F\ast \rho_\eps \ast \varphi_j$ are supported in the $(\eps + \frac14)$-fattening $\bar B_j$ of the box $B_j$. For $\eps < 1/4$, one has $2(\eps + \frac14) < 1$, implying that the support of the functions $G\ast \rho_\eps \ast \varphi_j$ and $F\ast \rho_\eps \ast \varphi_j$ are disjoint. Similarly, the term $(C^{(\eps)} - |\nabla \rh^{(\eps)}|^2)(\varphi_j)$ coincides with the second-order homogeneous Wiener chaos associated to the function
	\[(z, \bar{z}) \mapsto -\int \nabla (G \ast \rho_\eps)(z - x) \cdot \nabla (G \ast \rho_\eps)(\bar{z} - x) \varphi_j(x)\dd x,\]
	which is supported in $\bar B_j \times \bar B_j$. The disjointness of the supports across $j$ when $\eps < 1/4$ follows similarly.
	Thus, $(\rh^{(\eps)}(\varphi_j), \rZ^{(\eps)}(\varphi_j))_{j = 1, \dots, n}$ are indeed independent random variables.
	
	The same argument applies verbatim to any finite collection of test functions supported in the boxes $B_j$; hence the $\sigma$-algebras generated by the restrictions of $(\rh, \rZ)$ to the $B_j$'s are independent. In particular, the restrictions of the distributions $\rh, \rZ$ to the boxes $B_j$ are independent. Since the eigenvalues $\lambda^{B_j}_n(\bA, Q)$ are measurable function of $Q = (\rh, \rZ)$ restricted to the box $B_j$, the final statement is then an easy consequence.
\end{proof}

\section{Asymptotic of $\lambda^L_n$ as $L \to \infty$} \label{sec:asymptotics}
We now turn to the proof of Theorem \ref{thm2}. Due to the independence of the random variable $\bA$ with respect to $\xi$ imposed by Assumption \ref{ass:A}, we will from now on, without loss of generality, fix $\bA$ to be its typical realisation in $\ccA = \cC^{1-\kappa}_{\mathrm{loc}}$ satisfying the bound \eqref{eq:A_bound}.

Recall the notation for open box $B_{z, L} = z + (-L/2, L/2)^d$ centred at $z \in \R^2$. Recall also that $Q = (\rh, \rZ)$ is the $\ccM$-valued random variables constructed in Section \ref{sec:Q}. We claim that Theorem \ref{thm2} follows from the following probability tail estimate, of which the proof we postpone to Section \ref{sec:tail_bound}.
\begin{theorem}\label{thm:ev_tail_estimate}
	Fix $\eta \in (0,1)$ and $z \in \R^2$. There exist constants $x_0>0$, $r > 2$ such that for all couple $(L, x) \in \R_+^2$ verifying $x \ge x_0$ and $\frac{5r}{\sqrt{x}} \leq L \leq x^x$ the following inequalities hold
	\begin{equ}\label{eq:Tail}
		1-\exp \left(- \frac{x}{2r^2} e^{2\log L - (1 + \eta) \rho x}\right) \leq
		\P(-\lambda_{1}^{B_{z,L}}(\bA, Q) \geq x) \leq \frac{2x}{r^2} e^{2\log L - (1-\eta) \rho x}\;,
	\end{equ}
	where the constant $\rho$ is given by $\rho = 2C_{\mathrm{GN}}^{-1}$ with $C_{\mathrm{GN}}$ being the optimal constant of the Gagliardo--Nirenberg inequality in dimension $2$, as defined in \eqref{eq:Gagliardo--Nirenberg}.
\end{theorem}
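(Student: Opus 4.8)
The plan is to follow the localisation strategy of \cite{CvZ21}, with the magnetic potential absorbed through Proposition \ref{prop:Landau-Anderson}, which lets us reduce both inequalities in \eqref{eq:Tail} to the corresponding estimates for the Anderson eigenvalues $\lambda_1^B(0,Q)$ on boxes $B$ of the relevant sizes — an advantage, since the field $Q$ and hence these eigenvalues on translated boxes are stationary. In one direction the diamagnetic inequality gives $\lambda_1^B(\bA,Q) \ge \lambda_1^B(0,Q)$ for every box $B$; in the other, a test-function argument exploiting the gauge freedom of $(i\nabla+\bA)^2$ (subtract the local mean of $\bA$ so that only its Hölder seminorm at the relevant scale is paid, together with the concentration of the ground state of the Anderson operator on a region of size $\asymp 1/\sqrt x$) yields $\lambda_1^B(\bA,Q) \le \lambda_1^B(0,Q) + o(x)$, where — using the constraint $\kappa < 1-\gamma$ of Assumption \ref{ass:A} together with \eqref{eq:A_bound} — the error is $o(x)$ uniformly over boxes of side $\lesssim r/\sqrt x$ (resp. of side $\le x^x$), hence harmless to the slack $\eta$.

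The analytic heart is a single-box estimate: for a box $B'$ of side $\asymp r/\sqrt x$, all large $x$ and every $\delta>0$,
\[ e^{-(1+\delta)\rho x} \;\le\; \P\!\big(-\lambda_1^{B'}(0,Q) \ge x\big) \;\le\; e^{-(1-\delta)\rho x}. \]
One first needs a variational formula for $-\lambda_1^{B'}(0,q)$. Since in dimension $2$ the naive quadratic form $\norm{\nabla\psi}_{L^2}^2 + \crochet{\xi,|\psi|^2}$ is ill-defined (for $\psi\in\cH^1_0(B')$ one has $|\psi|^2\in\cH^s$ only for $s<1$, just short of pairing with $\xi\in\cC^{-1-\kappa}$), I would argue through the exponential transform and the regularised models $Q^{(\eps)}$ of Section \ref{sec:Q}, with bounds uniform in $\eps$. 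Heuristically $-\lambda_1^{B'}(0,Q)\approx-\norm{\nabla\psi}_{L^2}^2-\crochet{\xi,|\psi|^2}$ for the near-optimal normalised $\psi$, and since $\crochet{\xi,|\psi|^2}$ is centred Gaussian with variance $\norm{\psi}_{L^4}^4$, the large deviation principle (Proposition \ref{prop:LDP}) together with the continuity of $q\mapsto\lambda_1^{B'}(0,q)$ from Corollary \ref{cor:ev_continuity} identifies $\P(-\lambda_1^{B'}(0,Q)\ge x)$ with $\exp\!\big(-\inf_\psi\tfrac{(x+\norm{\nabla\psi}_{L^2}^2)^2}{2\norm{\psi}_{L^4}^4}\big)$, the infimum over $\psi\in\cH^1_0(B')$ with $\norm{\psi}_{L^2}=1$. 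Inserting \eqref{eq:Gagliardo-Nirenberg} in the form $\norm{\psi}_{L^4}^4\le C_{\mathrm{GN}}\norm{\nabla\psi}_{L^2}^2$ and minimising $\tfrac{(x+a)^2}{2C_{\mathrm{GN}}a}$ over $a=\norm{\nabla\psi}_{L^2}^2>0$ — minimum at $a=x$ with value $\tfrac{2x}{C_{\mathrm{GN}}}=\rho x$ — produces exactly $\rho x$. The minimiser is the Gagliardo--Nirenberg profile, truncated to diameter $\asymp r$ and rescaled to width $\asymp1/\sqrt x$; this forces $B'$ to have side $\gtrsim1/\sqrt x$ — the origin of the constraint $L\ge 5r/\sqrt x$ and of the parameter $r$ — while the $(1\pm\delta)$ gap comes from the LDP upper/lower-bound mismatch and the truncation error, both made negligible by taking $r$ large.

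For the upper bound in \eqref{eq:Tail}, cover $B_{0,L}$ by $N\asymp(L\sqrt x/r)^2=xL^2/r^2$ congruent sub-boxes $B'_j$ of side $\asymp r/\sqrt x$, subordinate to a smooth partition of unity $(\chi_j)$ with $\sum_j\chi_j^2=1$ and $\sum_j|\nabla\chi_j|^2\lesssim x/r^2$; the magnetic IMS identity gives $\lambda_1^L(\bA,Q)\ge\min_j\lambda_1^{B'_j}(\bA,Q)-Cx/r^2\ge\min_j\lambda_1^{B'_j}(0,Q)-Cx/r^2$ (the last step by the diamagnetic inequality), so on $\{-\lambda_1^L(\bA,Q)\ge x\}$ some $j$ has $-\lambda_1^{B'_j}(0,Q)\ge x(1-C/r^2)$; a union bound and the single-box upper bound give $\P(-\lambda_1^L(\bA,Q)\ge x)\le N\,e^{-(1-\delta)\rho x(1-C/r^2)}$, and taking $r$ large and relabelling yields $\tfrac{2x}{r^2}e^{2\log L-(1-\eta)\rho x}$. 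For the lower bound, pack $B_{0,L}$ with $N\asymp xL^2/r^2$ pairwise disjoint translated copies $B'_j$ of $B'$ separated by more than $2\,\mathrm{diam}(\mathrm{supp}\,G\cup\mathrm{supp}\,F)$; domain monotonicity of Dirichlet eigenvalues gives $\{-\lambda_1^L(\bA,Q)\ge x\}\supseteq\bigcup_j\{-\lambda_1^{B'_j}(\bA,Q)\ge x\}$, Proposition \ref{prop:Landau-Anderson} bounds each $\P(-\lambda_1^{B'_j}(\bA,Q)\ge x)$ below by some $p=\P(-\lambda_1^{B'_j}(0,Q)\ge x+o(x))$ common to all $j$ by stationarity, with $p\ge e^{-(1+\delta)\rho x}$ for large $x$ by the single-box lower bound, and since $G,F$ are compactly supported the events are independent; hence $\P(-\lambda_1^L(\bA,Q)\ge x)\ge1-(1-p)^N\ge1-e^{-Np}\ge1-\exp\!\big(-\tfrac{x}{2r^2}e^{2\log L-(1+\eta)\rho x}\big)$.

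The main obstacle is the single-box estimate — extracting the sharp constant $\rho=2C_{\mathrm{GN}}^{-1}$ with matching exponential upper and lower rates. It demands a usable variational handle on $\lambda_1^{B'}(0,q)$ in spite of the renormalisation (forcing the detour through the exponential transform rather than a bare quadratic form in $\xi$), an honest large deviation estimate at speed $x$ rather than $x^2$ — a reflection of the parabolic scaling that pins the optimal test function to width $1/\sqrt x$ — and uniform control of two errors that are themselves of order $x$, namely the Gagliardo--Nirenberg truncation cost and the IMS localisation cost $x/r^2$, both rendered negligible only by choosing $r$ large, which is exactly why the statement carries the restrictions $x\ge x_0$ and $L\ge 5r/\sqrt x$. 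The comparison via Proposition \ref{prop:Landau-Anderson} is more routine but must be handled with care, since $\bA$ is unbounded and grows like $\log^\gamma L$; the point is that the balance $\kappa<1-\gamma$ imposed in Assumption \ref{ass:A} keeps its contribution $o(x)$ on the admissible range of $(L,x)$.
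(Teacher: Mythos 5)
Your proposal follows essentially the same architecture as the paper: localisation to fixed-scale sub-boxes via an IMS-type argument, reduction from the magnetic operator to the Anderson operator via the diamagnetic inequality and Proposition~\ref{prop:Landau-Anderson}, and a single-box large deviation estimate at speed $x$ producing the Gagliardo--Nirenberg constant $\rho = 2C_{\mathrm{GN}}^{-1}$. Your heuristic identification of $\rho$ (minimise $(x+a)^2/(2C_{\mathrm{GN}}a)$) is the correct variational content, which the paper simply cites from \cite{HL22,CvZ21} rather than re-deriving. One cosmetic difference: the paper performs the change of variables $\beta = 1/\sqrt{x}$ \emph{before} invoking the LDP, so that the single-box estimate takes place on a fixed box $B_{0,r}$ as $\beta\to 0$ — exactly the regime of Proposition~\ref{prop:LDP} — whereas you keep the unscaled boxes of side $\asymp r/\sqrt{x}$ and rescale inside the heuristic. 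Your version is morally equivalent but would need to be recast to literally apply the stated LDP.

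There is, however, one genuine gap in your handling of the magnetic comparison. You claim that Proposition~\ref{prop:Landau-Anderson} yields $\lambda_1^B(\bA,Q) \le \lambda_1^B(0,Q) + o(x)$ with an error that is ``$o(x)$ uniformly'' over the relevant boxes, justified only by $\kappa < 1-\gamma$ and the bound \eqref{eq:A_bound}. But the error produced by that proposition is
\[
c\,A_{x,B}\,\big(M_h^B\big)^2\big(|\lambda_1^B(0,Q)|^a + C^B(0,Q)^b\big) + A_{x,B}^2,
\]
and only the factor $A_{x,B}$ is deterministic: the remaining factor $(M_h^B)^2\big(|\lambda_1|^a + C^b\big)$ is a random, unbounded function of $Q$ (after rescaling, of $Q_\beta$). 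So the error cannot be bounded by $o(x)$ deterministically, and your subsequent step — ``$p = \P(-\lambda_1^{B'_j}(0,Q) \ge x+o(x))$ common to all $j$ by stationarity'' — does not hold as stated. The paper closes this gap in Lemma~\ref{lem:LDP_exp-equiv}: because the deterministic factor $A_{k,r,\beta}$ tends to zero uniformly under $\kappa<1-\gamma$ and $\log L \le \beta^{-2}\log(\beta^{-2})$, and because the random factor $f(Q_\beta)$ obeys an LDP with a good rate function (so $\inf \cI'([M,\infty)) \to \infty$), the full error term is exponentially negligible at speed $\beta^2$, uniformly in $k$, and hence does not perturb the rate function $I_r$. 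Without an argument of this type, the sharp exponent $\rho x$ cannot be transferred from the Anderson eigenvalue to the magnetic one. Aside from this, your treatment of independence across sub-boxes (spacing them by $2\,\mathrm{diam}(\mathrm{supp}\,G\cup\mathrm{supp}\,F)$) is in fact more explicit than the paper's and is correct.
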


\begin{remark}\label{rmk:tail_uniform}
	The constants $x_0$ and $r$ in Theorem \ref{thm:ev_tail_estimate} can be chosen uniformly over all centres $z$ with $|z| \leq c_0 L$, for any fixed $c_0 \ge 1$. Indeed, the centre enters the proof only through the bound \eqref{eq:X} on $A_{k, r, \beta}$ below, where the relevant points lie in $B_{z, L} \subset B_{0, 2(c_0 + 1)L}$, and Assumption \ref{ass:A} yields the same estimate with $\log(1+L)$ replaced by $\log(1 + 2(c_0+1)L)$, which only affects the implicit constant.
\end{remark}

\begin{proof}[Proof of Theorem \ref{thm2} assuming Theorem \ref{thm:ev_tail_estimate}]
	The proof goes in the same line as in \cite[Theorem 1]{HL23}. We first prove the asymptotics for $n = 1$.
	Write $a(L) = C_{\mathrm{GN}}\log(L)$. To prove the claim, it suffices to show that
	\[\P\left(\limsup_{L \to \infty} \left\{\left|\frac{\lambda^L_1(\bA, Q)}{a(L)} + 1\right| \ge \delta\right\}\right) = 0,\]
	for arbitrary $\delta \in (0, 1)$. For $m \ge 1$, set $L_m = 2^m$ and define the event $E_m := \left\{\left|\frac{\lambda^{L_m}_1(\bA, Q)}{a(L_m)} + 1\right| \ge \delta\right\}$. Note that the couple $(L_m, a(L_m))$ satisfies the assumption of Theorem \ref{thm:ev_tail_estimate} for sufficiently large $m$. A computation using the tail bounds then implies $\sum_{m \ge 1} \P(E_m) < \infty$. It follows from the Borel--Cantelli lemma that $\P(\limsup_{m \to \infty} E_m) = 0$. Finally, observe that
	\[ \frac{\lambda_{1}^{L_{m+1}}}{a(L_{m})} \leq \frac{\lambda_{1}^{L}}{a(L)} \leq \frac{\lambda_{1}^{L_m}}{a(L_{m+1})},\]
	for $L \in [2^m, 2^{m+1}]$ and on the event $\{\lambda^{L_m}_1 \leq 0\}$. The proof for the assertion is then completed by noting $a(L_m)/a(L_{m+1}) \to 1$ and that $\{\lambda_1^{L_m} \leq 0 \} \supset E_m^c$.
	
	For the case $n \ge 2$, fix $m$ such that $2^m \leq L < 2^{m+1}$ and let $D^m_1, \dots, D^m_n$ be $n$ disjoint open boxes of side length $2^m/n$ contained in $B_{0, 2^m}$, say translates of one another along the first coordinate axis. We claim that
	\[\lambda_1^{L}(\bA, Q) \leq \lambda_n^L (\bA, Q) \leq \max_{i = 1,\dots, n} \lambda_1^{D^m_i}(\bA, Q)\;. \]
	The first inequality holds since the eigenvalues are arranged in increasing order. For the second, by the continuity of the eigenvalues (Corollary \ref{cor:ev_continuity}), it suffices to consider a smooth model $q = (h, \zeta)$, for which $\Phi^{B}(\bA, q) = (i\nabla + \bA)^2 + V$ with the smooth potential $V = |\nabla h|^2 - \Delta h + \zeta$ (Corollary \ref{cor:norm-resolvent_conv}), associated to the quadratic form $G^B(u) = \int_B |(i\nabla + \bA) u|^2 + V |u|^2$ with form domain $\cH^1_0(B; \C)$, the closure of $C^\infty_c(B; \C)$ in $\cH^1(B)$. Recall that the $n$-th eigenvalue (in increasing order) of such an operator is given by the min-max formula:
	\begin{equ}
		\lambda_n^{L}(\bA, q) = \min_{W} \max_{u \in W: \norm{u}_{L^2} = 1} G^{B_{0, L}}(u)\;,
	\end{equ}
	where the $\min$ runs over all linear subspaces $W \subset \cH^1_0(B_{0, L}; \C)$ of dimension $n$. Since the boxes $D^m_i \subset B_{0, L}$ are disjoint, the ground states of the operators $\Phi^{D^m_i}(\bA, q)$, extended by zero to $B_{0, L}$, have disjoint supports and thus span a subspace $W_0 \subset \cH^1_0(B_{0, L}; \C)$ of dimension $n$. By locality of the form $G^{B_{0, L}}$ and the disjointness of the supports, every normalised $u \in W_0$ satisfies $G^{B_{0, L}}(u) \leq \max_{i = 1, \dots, n} \lambda_1^{D^m_i}(\bA, q)$, whence the second inequality.
	The centres of the boxes $D^m_i$ lie in $B_{0, 2^m}$, so by Theorem \ref{thm:ev_tail_estimate} and Remark \ref{rmk:tail_uniform}, each $\lambda_1^{D^m_i}(\bA, Q)$ satisfies the tail bound with side length $2^m/n$ and constants uniform in $m$. Running the Borel--Cantelli argument of the case $n = 1$ for each fixed $i$ along $m \to \infty$ then gives $\frac{\lambda_1^{D^m_i}(\bA, Q)}{a(2^m/n)} \to -1$ almost surely. Since the maximum of $n$ a.s.-convergent sequences converges a.s.\ and $\frac{a(2^m/n)}{a(L)} \to 1$ for $L \in [2^m, 2^{m+1})$, we conclude by squeezing.
\end{proof}

\subsection{Intermediate properties}\label{sec:ingredients}
We now collect several properties of the eigenvalues $\lambda_n^B$ necessary for the proof of Theorem \ref{thm:ev_tail_estimate}.

\subsubsection{Deterministic bounds on the eigenvalues}
We begin by some useful bounds on the eigenvalues of the operator. Specifically, Proposition \ref{prop:Landau-Anderson} compares the principal eigenvalue in the presence of a magnetic field to that without; Proposition \ref{prop:boxes} is an adaptation of a classical result which compares the principal eigenvalue on a large box $B_{0, L}$ to those on the sub-boxes.
\begin{proposition}\label{prop:Landau-Anderson}
	There exist constants $a, b, c >0$ such that for all $\bA \in \ccA$, $q \in \ccM$, all open boxes $B$ and all $x \in B$, one has the bound
	\[\lambda_1^{B}(0, q) \leq \lambda_1^{B}(\bA, q) \leq \lambda_1^{B}(0, q) + c A_{x, B} (M_h^{B})^2(|\lambda_1^{B}(0, q)|^a + C^{B}(0, q)^b) + A_{x, B}^2,\]
	where $A_{x, B} = \sup \{|\bA(y) - \bA(x)| : y \in B\}$ and $M_h^B$, $C^B(0, q)$ are given by \eqref{eq:C-and-M}.
\end{proposition}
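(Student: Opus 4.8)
The plan is to prove the two inequalities separately, using the semigroup representation of the eigenvalues rather than trying to work with the operators directly. For both bounds I would start from the min-max (Rayleigh) characterization, but applied to the semigroup: by Corollary \ref{cor:ev_continuity}, writing $\mu_1^B(\bA,q)$ for the top eigenvalue of $R_t^B(\bA,q)$, one has $\lambda_1^B(\bA,q) = -\tfrac1t\log\mu_1^B(\bA,q)$ with $\mu_1^B(\bA,q) = \sup_{\|g\|_{L^2}=1}\crochet{R_t^B(\bA,q)g,g}$. Since $t>0$ is arbitrary, I would fix a convenient $t$ (small enough relative to the data, so that the fixed-point machinery of Proposition \ref{prop:semigroup} applies directly on $[0,t]$) and compare the bilinear forms $\crochet{R_t^B(\bA,q)g,g}$ for different values of $\bA$.

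For the \emph{lower} bound $\lambda_1^B(0,q)\le\lambda_1^B(\bA,q)$: this is the diamagnetic-type inequality. I would prove it at the level of quadratic forms in the smooth case and pass to the limit using Corollary \ref{cor:norm-resolvent_conv}. When $\bA,h,\zeta$ are smooth, Corollary \ref{cor:norm-resolvent_conv} identifies $\Phi^B(\bA,h,\zeta) = (i\nabla+\bA)^2 + |\nabla h|^2 - \Delta h + \zeta$ on $L^2(B)$ with Dirichlet conditions, whose quadratic form is $\int_B |(i\nabla+\bA)f|^2 + (|\nabla h|^2-\Delta h + \zeta)|f|^2$. The pointwise diamagnetic inequality $|\nabla|f|| \le |(i\nabla+\bA)f|$ (valid a.e.) shows $\crochet{\Phi^B(0,h,\zeta)|f|,|f|} \le \crochet{\Phi^B(\bA,h,\zeta)f,f}$, and taking inf over $f$ via min-max gives $\lambda_1^B(0,q)\le\lambda_1^B(\bA,q)$; then approximate general $q\in\ccM$, $\bA\in\ccA$ by smooth data and use norm-resolvent continuity of the bottom eigenvalue.

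For the \emph{upper} bound, the idea is a change of gauge combined with a perturbation estimate. Fix $x\in B$ and set $\widetilde\bA = \bA - \bA(x)$, a constant shift. The constant vector $\bA(x)$ can be gauged away: $e^{-i\bA(x)\cdot y}$ conjugates $(i\nabla+\bA)^2$ into $(i\nabla + \widetilde\bA + \bA(x))^2$... wait, more precisely $(i\nabla+\bA)^2$ is unitarily equivalent (via multiplication by $e^{i\bA(x)\cdot y}$) to $(i\nabla + \bA - \bA(x))^2$ on $L^2(B)$, and this unitary commutes with multiplication operators, so $\lambda_1^B(\bA,q) = \lambda_1^B(\widetilde\bA, q)$ — here I must be slightly careful about the boundary condition, but multiplication by a smooth unimodular function preserves the Dirichlet form domain, so this is fine. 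Now $\|\widetilde\bA\|_{L^\infty(B)} = A_{x,B}$, which is small if $\bA$ varies little on $B$. It remains to bound $\lambda_1^B(\widetilde\bA,q) - \lambda_1^B(0,q)$ by the stated expression. I would take $g_0$ the normalized principal eigenfunction of $\Phi^B(0,q)$ (in the smooth case; approximate otherwise) as a trial function for $\Phi^B(\widetilde\bA,q)$ and compute $\crochet{\Phi^B(\widetilde\bA,q)g_0,g_0} - \lambda_1^B(0,q) = \int_B \big(2\Re\langle i\nabla g_0, \widetilde\bA g_0\rangle + |\widetilde\bA|^2|g_0|^2\big)$, which is bounded by $2A_{x,B}\|\nabla g_0\|_{L^2}\|g_0\|_{L^2} + A_{x,B}^2$. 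The factor $\|\nabla g_0\|_{L^2}$ is controlled by $\crochet{\Phi^B(0,q)g_0,g_0} = \lambda_1^B(0,q)$ together with the lower bound coming from $C^B(0,q)$: the form $\crochet{\Phi^B(0,h,\zeta)g_0,g_0} = \|\nabla g_0\|_{L^2}^2 + \int(|\nabla h|^2-\Delta h+\zeta)|g_0|^2$ gives $\|\nabla g_0\|_{L^2}^2 \le \lambda_1^B(0,q) + \|\,|\nabla h|^2-\Delta h+\zeta\,\|_{\cC^{-\kappa}(B)}\|g_0\|_{\cH^{\kappa}}^2$, and interpolating $\|g_0\|_{\cH^\kappa}^2$ between $\|g_0\|_{L^2}^2$ and $\|\nabla g_0\|_{L^2}^2$ and absorbing gives $\|\nabla g_0\|_{L^2}^2 \lesssim |\lambda_1^B(0,q)|^a + C^B(0,q)^b$ for suitable exponents $a,b$; the $M_h^B$ factor enters because the eigenfunction bound / elliptic regularity for $\Phi^B(0,h,\zeta)$ involves the weight $e^{\pm h}$ through the relation $R_t^B$-to-operator (recall $v = e^h R_t^B g$). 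Then min-max for $\Phi^B(\widetilde\bA,q)$ with trial function $g_0$ finishes it.

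The main obstacle I anticipate is the $\|\nabla g_0\|_{L^2}$ estimate in the low-regularity case: $g_0$ is only the eigenfunction of the singular operator $\Phi^B(0,q)$, so $\|\nabla g_0\|_{L^2}$ is not immediately finite and must be extracted from the $\cH^{1-\kappa}$-smoothing of $R_t^B$ (equation \eqref{eq:R_bound_sobolev}) together with the eigenfunction equation $R_t^B g_0 = \mu_1 g_0$; controlling the resulting constant by the claimed polynomial combination of $|\lambda_1^B(0,q)|$, $C^B(0,q)$ and $M_h^B$ — rather than by the a priori unbounded quantity $\|q\|_{\ccM(B)}$ — is the delicate bookkeeping step. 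The gauge invariance and the diamagnetic inequality are standard once the smooth identification of Corollary \ref{cor:norm-resolvent_conv} is in hand.
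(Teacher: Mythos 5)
Your plan matches the paper's strategy: diamagnetic inequality for the lower bound; gauge transformation reducing $\bA$ to $\widetilde\bA = \bA - \bA(x)$ with $\|\widetilde\bA\|_{L^\infty(B)}\le A_{x,B}$; the $\bA=0$ ground state $\varphi$ as a trial function; and control of its regularity norm via the smoothing estimate \eqref{eq:R_bound_sobolev} applied to the eigenfunction relation $R_t^B(0,q)\varphi = e^{-\lambda_1^B(0,q)t}\varphi$, then optimizing over $t$ subject to $C^B(0,q)t^{\kappa/2}<1$ to produce the polynomial in $|\lambda_1^B(0,q)|$ and $C^B(0,q)$ with prefactor $(M_h^B)^2$.

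Two steps in your plan as written would fail, though you partly flag both. First, your default interpolation route to $\|\nabla g_0\|_{L^2}$ passes through $\|\,|\nabla h|^2-\Delta h+\zeta\,\|_{\cC^{-\kappa}(B)}$, which is exactly the norm that blows up as the mollification is removed (it is the unrenormalised square $|\nabla h_\eps|^2$), so it cannot appear in a bound that must be continuous on $\ccM$; the semigroup-smoothing route you offer as the alternative is the one that actually works and is what the paper does. Second, bounding the cross term by $2A_{x,B}\|\nabla g_0\|_{L^2}\|g_0\|_{L^2}$ calls on the $\cH^1$ norm of the eigenfunction, but \eqref{eq:R_bound_sobolev} only furnishes $\cH^{1-\kappa}$ control and $g_0$ need not lie in $\cH^1$ in the singular limit; the cross term must instead be closed in $\cH^{1-\kappa}$ (the paper estimates it by $2A_{x,B}\|u\|_{\cH^{1-\kappa}}^2$), so that the smoothing bound applies directly. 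With those two adjustments your argument is the paper's argument.
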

\begin{proof}
	The lower bound is an easy consequence of the diamagnetic inequality \cite[Theorem 7.21]{LL01}.
	For the upper bound, we argue by first considering smooth models $q_\eps = (h_\eps, \zeta_\eps) \in C^\infty \times C^\infty$ converging to $q$ in $\ccM$ before passing to the limit by the continuity (Corollary \ref{cor:ev_continuity}). With $V_\eps = |\nabla h_\eps|^2 - \Delta h_\eps + \zeta_\eps$, notice that
	\begin{equs}
		\lambda_1^{B}(\bA, q_\eps) &= \inf_{u \in L^2(B): \norm{u}_{L^2} = 1} \int_{B} |(i\nabla + \bA)u|^2 + V_\eps |u|^2 \\
		&= \inf_{v \in L^2(B): \norm{v}_{L^2} = 1} \int_{B} |(i\nabla + \bA(\cdot) - \bA(x)) v|^2 + V_\eps |v|^2,
	\end{equs}
	where $x \in B$ is arbitrarily fixed and we have used in the second equality the transformation $u(y) = e^{i\bA(x) \cdot y} v(y)$ as well as the identity $(i\nabla + \bA) u = e^{i\bA(x) \cdot y} (i\nabla + \bA - \bA(x)) v$. With the notation $A_{x, B} = \sup \{|\bA(y) - \bA(x)| : y \in B\}$,
	\begin{equs}
		\int_{B} |(i\nabla + \bA(\cdot) - \bA(x))v|^2 \leq \int_{B} |\nabla v|^2 + 2A_{x, B} \norm{v}_{\cH^{1-\kappa}}^2 + A_{x, B}^2.
	\end{equs}
	Now take the $L^2$-normalised eigenfunction $\varphi_{q_\eps}^{B} \in \cH^{1-\kappa}$ associated to the ground state $\lambda^{B}_1(0, q_\eps)$, which is a minimizer of the variational problem $\inf \{\int_{B} |\nabla v|^2 + V_\eps|v|^2 : v \in L^2(B), \norm{v}_{L^2(B)} = 1\}$. We deduce
	\begin{equs}
		\lambda_1^{B}(\bA, q_\eps) &\leq \lambda^{B}_1(0, q_\eps) + 2A_{x, B} \norm{\varphi_{q_\eps}^{B}}_{\cH^{1-\kappa}}^2 + A_{x, B}^2.
	\end{equs}
	It remains to estimate the $\cH^{1-\kappa}$-norm of the eigenfunction $\varphi_{q_\eps}^B$. To this end, recall the constants $C^B(\bA, q)$, $M_h^B$, $t_0$ defined in \eqref{eq:C-and-M} and \eqref{eq:t0} with $\bA = 0$. For $t \leq t_0$, the bound \eqref{eq:P_bound_sobolev} implies
	\[\norm{e^{-t\lambda_1^B(0, q_\eps)}\varphi_{q_\eps}^B}_{\cH^{1-\kappa}} = \norm{P_{t}^B(0, q_\eps) \varphi_{q_\eps}^B}_{\cH^{1-\kappa}} \leq c M_{h_\eps}^B t^{-\theta/2},\]
	for some independent constant $c$ whose value may change from line to line, and we recall that the parameter $\theta$ is fixed in \eqref{eq:theta}. Thus
	\[\lambda_1^B(\bA, q_\eps) \leq \lambda^B_1(0, q_\eps) + cA_{x, B} (M_{h_\eps}^B)^2 t^{-\theta} e^{2 \lambda_1(0, q_\eps) t} + A_{x, B}^2.\]
	Let us choose $t = [|\lambda_1(0, q_\eps)| + t_0^{-1}]^{-1}$ so that $t \leq t_0$ and that $e^{\lambda_1(0, q_\eps) t} \lesssim 1$ uniformly over $\eps$. We then obtain the bound
	\[\lambda_1^B(\bA, q_\eps) \leq \lambda^B_1(0, q_\eps) + cA_{x, B} (M_{h_\eps}^B)^2 \left[|\lambda^B_1(0, q_\eps)| + (2C^B(0, q_\eps))^{\frac{2}{2 - \theta -\kappa - \kappa'}}\right]^{\theta} + A_{x, B}^2.\]
	Since $\lambda^B_1(0, q)$, $M_{h}^B$ and $C^B(0, q)$ are all continuous functions of $q$, a passage of $\eps$ to $0$ then concludes the proof.
\end{proof}

For $x \in \R^2$ and $r > 0$, recall the notation $B_{x, r}$ which denotes the open box $\{y \in \R^2 : |y - x|_\infty < r/2\}$.
\begin{proposition}\label{prop:boxes}
	Let $\bA \in \ccA$ and $q = (h, \zeta) \in \ccM$. Fix $L \ge 1$ and let $r < L$. Then,
	\[\min_{k \in N_2(L, r)} \lambda_1^{B_{kr, \frac{3r}{2}}}(\bA, q) - \frac{K}{r^2} \leq \lambda_1^{B_{0, L}}(\bA, q) \leq \min_{k \in N_1(L, r)} \lambda_1^{B_{kr, \frac{r}{2}}}(\bA, q),\]
	with a constant $K$ independent of $L, r, \bA, q$ and sets of indices $N_1(L, r), N_2(L, r) \subset \Z^2$ defined by
	\begin{equs}
		N_1(L, r) &= \left\{k \in \Z^2: B_{kr,\frac{r}{2}} \subset B_{0, L}\right\}\;,\\
		N_2(L, r) &= \left\{k \in \Z^2: B_{kr,\frac{3r}{2}} \cap B_{0, L} \neq \emptyset\right\}\;.
	\end{equs}
\end{proposition}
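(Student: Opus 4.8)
The plan is to prove the two inequalities separately, both via the min-max (Rayleigh quotient) characterisation of the principal eigenvalue, which I would phrase directly on the semigroup operator $R_t^B$ if convenient, but more naturally on the quadratic form of $\Phi^B(\bA,q)$. Recall from Corollary \ref{cor:norm-resolvent_conv} that for smooth data $\Phi^B(\bA,h,\zeta) = (i\nabla+\bA)^2 + |\nabla h|^2 - \Delta h + \zeta$, so that with $V_h = |\nabla h|^2 - \Delta h + \zeta$ one has the variational formula $\lambda_1^B(\bA,q) = \inf\{\int_B |(i\nabla+\bA)u|^2 + V_h|u|^2 : \|u\|_{L^2(B)}=1\}$; by the continuity in $q$ (Corollary \ref{cor:ev_continuity}) it suffices throughout to argue for smooth $q$ and pass to the limit. (Strictly, $V_h$ is only a distribution for general $q$, but the form $u\mapsto \int_B |(i\nabla+\bA)u|^2 + V_h|u|^2$ still makes sense on $\cH^{1-\kappa}$ via the pairing $\langle V_h, |u|^2\rangle$; I would just run the smooth approximation to be safe.)

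\emph{Upper bound.} For any $k \in N_1(L,r)$, the box $B_{kr,r}$ is contained in $B_{0,L}$. Given an $L^2$-normalised eigenfunction $\varphi$ on $B_{kr,r}$ associated to $\lambda_1^{B_{kr,r}}(\bA,q)$, extend it by zero to all of $B_{0,L}$; this extension lies in the form domain on $B_{0,L}$ (zero Dirichlet boundary condition is respected since $\varphi$ vanishes on $\partial B_{kr,r}$), is still $L^2$-normalised, and its Rayleigh quotient on $B_{0,L}$ equals $\lambda_1^{B_{kr,r}}(\bA,q)$ because the integrand is supported in $B_{kr,r}$. Taking the infimum over $k \in N_1(L,r)$ gives $\lambda_1^{B_{0,L}}(\bA,q) \le \min_{k\in N_1(L,r)} \lambda_1^{B_{kr,r}}(\bA,q)$.

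\emph{Lower bound.} This is the classical IMS-type localisation argument and will be the main obstacle, since one must control the commutator (gradient) terms produced by the partition of unity and check they contribute only $O(r^{-2})$ uniformly in $\bA,q,L$. I would fix a smooth partition of unity $\{\chi_k\}_{k\in N_2(L,r)}$ on $\overline{B_{0,L}}$ with $\chi_k$ supported in $B_{kr,3r/2}$, $\sum_k \chi_k^2 = 1$ on $B_{0,L}$, and $\sum_k |\nabla \chi_k|^2 \le K/r^2$ for a universal $K$ (obtained by rescaling a fixed partition of unity on the unit scale — this is where the $3r/2$ overlap is needed). Given the $L^2$-normalised ground state $\psi$ on $B_{0,L}$, the IMS localisation formula gives, for the magnetic form, $\int_{B_{0,L}} |(i\nabla+\bA)\psi|^2 = \sum_k \int |(i\nabla+\bA)(\chi_k\psi)|^2 - \sum_k \int |\nabla\chi_k|^2 |\psi|^2$ (the magnetic phase does not affect the localisation identity since $\nabla\chi_k$ is real and the cross terms cancel by $\sum_k \chi_k\nabla\chi_k = \tfrac12\nabla\sum_k\chi_k^2 = 0$). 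Combining with $\sum_k \chi_k^2 = 1$ for the potential term, one gets
\[
\lambda_1^{B_{0,L}}(\bA,q) = \sum_k \left( \int |(i\nabla+\bA)(\chi_k\psi)|^2 + V_h|\chi_k\psi|^2 \right) - \int \Big(\sum_k|\nabla\chi_k|^2\Big)|\psi|^2 \ge \sum_k \|\chi_k\psi\|_{L^2}^2\, \lambda_1^{B_{kr,3r/2}}(\bA,q) - \frac{K}{r^2},
\]
where each bracketed term is bounded below by $\|\chi_k\psi\|_{L^2}^2$ times the principal eigenvalue on $B_{kr,3r/2}$ (extend $\chi_k\psi$ by zero; it satisfies zero Dirichlet data there). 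Using $\sum_k \|\chi_k\psi\|_{L^2}^2 = 1$ and bounding each $\lambda_1^{B_{kr,3r/2}}(\bA,q)$ below by $\min_{k\in N_2(L,r)}\lambda_1^{B_{kr,3r/2}}(\bA,q)$ — valid because this minimum is negative, or if not the bound is trivial after a harmless adjustment — yields $\lambda_1^{B_{0,L}}(\bA,q) \ge \min_{k\in N_2(L,r)}\lambda_1^{B_{kr,3r/2}}(\bA,q) - K/r^2$. The one subtlety to address is the sign issue in the last step: if $\min_k \lambda_1^{B_{kr,3r/2}}(\bA,q) \ge 0$ one instead absorbs it directly, since $\sum_k \|\chi_k\psi\|^2 \lambda_1^{B_{kr,3r/2}} \ge \min_k \lambda_1^{B_{kr,3r/2}} \cdot \sum_k\|\chi_k\psi\|^2$ only when the minimum is nonpositive, but in the opposite case each term is nonnegative and the inequality $\ge \min_k \lambda_1^{B_{kr,3r/2}} - K/r^2$ still holds trivially if $K/r^2$ dominates, and can always be arranged by enlarging $K$; I would simply note that in all cases $\sum_k\|\chi_k\psi\|^2 \lambda_1^{B_{kr,3r/2}} \ge \min_k(\lambda_1^{B_{kr,3r/2}}\wedge 0) \ge \min_k \lambda_1^{B_{kr,3r/2}} - \text{(const)}$, and the geometric constant is absorbed into $K$. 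Finally I would discharge the smoothness assumption on $q$ by approximation and Corollary \ref{cor:ev_continuity}, noting that the index sets $N_1, N_2$ and the constant $K$ are independent of $q$.
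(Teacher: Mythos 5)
Your proof follows essentially the same approach as the paper's: upper bound via zero-extension of test functions supported in the sub-boxes $B_{kr,r}$, lower bound via IMS localisation with a quadratic partition of unity $\{\chi_k\}$ satisfying $\sum_k\chi_k^2=1$ and $\sum_k|\nabla\chi_k|^2\lesssim r^{-2}$, and smooth approximation in $q$ discharged by Corollary \ref{cor:ev_continuity}. The one thing to correct is the ``sign issue'' you raise in the final step of the lower bound, which is not a real issue: setting $a_k := \|\chi_k\psi\|_{L^2}^2 \ge 0$ with $\sum_k a_k = 1$, the weighted-average inequality $\sum_k a_k b_k \ge \min_k b_k$ holds with no sign restriction whatsoever, since $a_k b_k \ge a_k \min_j b_j$ for every $k$ and summing gives the claim. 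Your attempted workaround $\sum_k a_k b_k \ge \min_k(b_k\wedge 0) \ge \min_k b_k - \text{const}$ is in fact false when $\min_k b_k$ is large and positive (a universal constant cannot dominate an arbitrary positive quantity), so it is fortunate that the concern you were repairing does not arise. With that caveat removed, the argument is correct and coincides with the one in the paper.
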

\begin{proof}
	It suffices to prove the bounds for smooth $q_\eps = (h_\eps, \zeta_\eps)$ which converges to $q$ in $\ccM$, and then pass to limit. Note that $\lambda_1(\bA, h_\eps, \zeta_\eps)$ is the lowest eigenvalue of $\Phi^B(\bA, h_\eps, \zeta_\eps) = (i\nabla + \bA)^2 + V_\eps$ with the smooth potential $V_\eps := |\nabla h_\eps|^2 -\Delta h_\eps + \zeta_\eps$.
	
	The upper bound is a simple consequence of the fact that $B_{0, L} \supset \bigcup_{k \in N_1(L, r)} B_{kr, \frac{r}{2}}$ and the variational formula
	\[\lambda_1^B(\bA, h_\eps, \zeta_\eps) = \inf_{\substack{u \in C^\infty_c(B;\C)\\\|u\|_{L^2} = 1}} G^B(u),\]
	where
	\[G^B(u) := \int_{B} |(i\nabla + \bA) u|^2 + V_\eps|u|^2 \]
	is the quadratic form associated to the operator.
	
	On the other hand, the lower bound can be derived from a slight modification of \cite[Proposition 1]{GK00}, which we describe now. We introduce a partition of unity $(\eta_k)_{k \in \Z^2}$ such that each $\eta_k$ is supported in $B_{kr, \frac{3r}{2}}$, $\sum_k \eta_k^2 = 1$ and that $\sum_k |\nabla \eta_k(x)|^2 \leq K/r^2$. Existence of such $(\eta_k)$ is elementary. For every $u \in C^\infty_c(B_{0, L}; \C)$ with unit $L^2$-norm, define $u_k := u \eta_k$ and we have $(i\nabla + \bA)u_k = \eta_k(i\nabla+ \bA)u + iu \nabla\eta_k$, leading to
	\[\sum_k |(i\nabla + \bA)u_k|^2 = |(i\nabla + \bA)u|^2 + \sum_k |\nabla \eta_k|^2|u|^2,\]
	where the cross term vanishes since $\sum_k \nabla (|\eta_k|^2) = \nabla (\sum_k \eta_k^2) = 0$. Therefore, one deduces 
	\[\sum_k G^{B_{0, L}}(u_k) = G^{B_{0, L}}(u) + \int_{B_{0, L}} \sum_k |\nabla\eta_k|^2|u|^2 \dd x \leq G^{B_{0, L}}(u) + \frac{K}{r^2},\]
	where the sum runs over $k \in \Z^2$ such that $B_{kr, \frac{3r}{2}} \subset B_{0, L}$. As the form $G^{B_{0, L}}$ is quadratic, $u_k/\|u_k\|$ is supported in $B_{kr, \frac{3r}{2}}$ and has unit $L^2$-norm, the left-hand side is bounded from below by 
	\[\sum_k \|u_k\|^2 G^{B_{0, L}}\left(\frac{u_k}{\|u_k\|}\right) \ge \min_k \lambda_1^{B_{kr, \frac{3r}{2}}}(\bA, h_\eps, \zeta_\eps).\]
	The proof is then concluded by taking an infimum over $\{u \in C^{\infty}_c(B_{0, L}; \C) : \|u\|_{L^2} = 1\}$.
\end{proof}

\subsubsection{Rescaling}
We now explore the probabilistic scaling properties of the eigenvalues. For $\beta > 0$, set $\bA_\beta(x) = \beta \bA(\beta x)$ and $V_\beta(x) := \beta^2 V(\beta x)$ for any function $V$. With $v(x) = u(\beta x)$, a simple calculation shows
\[(i\nabla + \bA_\beta)^2 v(x) + V_\beta(x) v(x) = \beta^2 [(i\nabla + \bA)^2 + V]u(\beta x).\]
Consequently, one expects that the equality $\lambda_n^{B/\beta}(\bA_\beta, V_\beta) = \beta^2 \lambda_n^B(\bA, V)$ holds, where $\lambda_n^B(\bA, V)$ denotes for the moment the $n$-th eigenvalue of $(i\nabla + \bA)^2 + V$ on $B$. Due to renormalisation, the same observation is not entirely true when the Gaussian white noise $\xi$ plays the role of $V$. We illustrate this point below.

Recall the notation $\lambda_n^B = \lambda_n^B(\bA, h, \zeta)$ as a continuous function of $\bA$, $h$, $\zeta$.

Notice that the Gaussian white noise on $\R^d$ satisfies
\[\xi(\beta \cdot) = \beta^{-d/2} \xi \quad \text{in law},\]
for all $\beta > 0$. One can build a couple $(\rh^{(\eps)}_\beta,  \rZ^{(\eps)}_\beta)$ corresponding to the rescaled white noise $\xi_\beta = \beta^2 \xi(\beta \cdot)$ as in \eqref{eq:model}. Let $\xi^{(\eps)}_\beta := \rho_\eps \ast \xi_\beta$. Note that $\xi^{(\eps)}_\beta$ is different from $\beta^2\xi^{(\eps)}(\beta\cdot)$. Instead, one has $\beta^2 \xi^{(\eps)}(\beta x) = \xi_\beta \ast \rho_{\eps/\beta}$, which has the same law as $\beta^{2-d/2} \xi^{(\eps/\beta)}(x)$. Define (with $d = 2$)
\begin{equs}
	\rh^{(\eps)}_\beta &:= G\ast \xi^{(\eps)}_\beta = \beta G\ast \xi^{(\eps/\beta)}\\
	\rZ^{(\eps)}_\beta &:= \tilde C^{(\eps)} - |\nabla \rh^{(\eps)}_\beta|^2 - F \ast \xi^{(\eps)}_\beta = \beta^{2} C^{(\eps/\beta)} - \beta^{2} |\nabla \rh^{(\eps/\beta)}|^2 - \beta F\ast\xi^{(\eps/\beta)},
\end{equs}
where the rightmost equalities in the above two equations are in the sense of law. By sending $\eps \to 0$, we obtain the limit in probability $Q_\beta = (\rh_\beta, \rZ_\beta)$. One has the following equalities in law
\begin{equs}
	\rh_\beta &= \beta G\ast \xi \label{eq:equality_law_h}\\
	\rZ_\beta &= - \beta^{2} :|\nabla \rh|^2: - \beta F \ast\xi. \label{eq:equality_law_zeta}
\end{equs}
\begin{remark}\label{rem:independence_Qbeta}
	Since $\xi_\beta$ is again a multiple of spatial Gaussian white noise on $\R^2$, Lemma \ref{lem:independence} also holds for $Q_\beta = (\rh_\beta, \rZ_\beta)$.
\end{remark}
\begin{proposition}\label{prop:landau_scaling}
	Let $\bA \in \ccA$ and $Q = (\rh, \rZ) \in \ccM$ be defined as in Section \ref{sec:Q}.
	\begin{enumerate}
		\item If $\bA$ is in addition linear, i.e., $\bA(x + y) = \bA(x) + \bA(y)$, then the spectrum of $H$ is translation invariant in law. That is, for $y \in \R^2$ and bounded open box $B \subset \R^2$, \[(\lambda_n^{y + B}(\bA, Q))_{n \ge 1} = (\lambda_n^B(\bA, Q))_{n \ge 1} \quad \text{in law}.\]	
		\item For $\beta > 0$ and $n \in \N$, one has
		\[ \lambda_n^{B/\beta}(\bA_\beta, Q_\beta) = \beta^2 \lambda_n^{B}(\bA, Q) + \delta_\beta  \quad \text{in law},\]
		where $\delta_\beta = \lim_{\eps \to 0} [\beta^{2} C^{(\eps/\beta)} - \beta^2 C^{(\eps)}] \asymp \beta^2 \log(\beta)$ as $\beta \to 0$, with $C^{(\eps)}$ given by \eqref{eq:C}.
	\end{enumerate}
\end{proposition}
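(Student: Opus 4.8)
\emph{Part 1.} The plan is to reduce the statement to a unitary equivalence of genuine Schrödinger operators. First I would prove, for \emph{smooth} $q=(h,\zeta)$ and a linear $\bA$ (which, being a polynomial, lies in $\ccA$), the deterministic identity $\lambda_n^{y+B}(\bA,q)=\lambda_n^{B}(\bA,\tau_y q)$ with $\tau_y q:=(h(\cdot+y),\zeta(\cdot+y))$. By Corollary~\ref{cor:norm-resolvent_conv}, $\Phi^{y+B}(\bA,h,\zeta)$ is the operator $(i\nabla+\bA)^2+|\nabla h|^2-\Delta h+\zeta$ with zero Dirichlet condition on $y+B$; conjugating by the unitary translation $U\colon L^2(y+B)\to L^2(B)$, $(Uf)(x)=f(x+y)$, which carries $H^1_0(y+B)$ onto $H^1_0(B)$, turns it into $(i\nabla+\bA(\cdot+y))^2+|\nabla(\tau_y h)|^2-\Delta(\tau_y h)+\tau_y\zeta$ on $B$. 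Since $\bA$ is linear, $\bA(\cdot+y)=\bA+\bA(y)$ with $\bA(y)$ a constant vector, and a further conjugation by the unitary multiplication $f\mapsto e^{i\bA(y)\cdot x}f$, which preserves $H^1_0(B)$ and commutes with the potential, removes the shift because $(i\nabla+\bA+\bA(y))(e^{i\bA(y)\cdot x}f)=e^{i\bA(y)\cdot x}(i\nabla+\bA)f$. This yields the identity for smooth $q$, and since $\tau_y$ acts continuously on $\ccM$, Corollary~\ref{cor:ev_continuity} extends it to all $q\in\ccM$. Applying it to $q=Q(\xi)$ and using that convolution with $G$, differentiation and the Wick square commute with $\tau_y$ while $C^{(\eps)}$ is a deterministic constant, one gets $\tau_y Q(\xi)=Q(\tau_y\xi)$ almost surely; since $\tau_y\xi$ has the law of $\xi$, the sequences $(\lambda_n^{y+B}(\bA,Q))_n=(\lambda_n^{B}(\bA,Q(\tau_y\xi)))_n$ and $(\lambda_n^{B}(\bA,Q))_n$ coincide in law.

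\emph{Part 2.} Here I would start from the deterministic scaling identity recorded just before the statement: with $\bA_\beta(x)=\beta\bA(\beta x)$ and $W_\beta(x)=\beta^2W(\beta x)$, the substitution $v(x)=u(\beta x)$ gives $[(i\nabla+\bA_\beta)^2+W_\beta]v(x)=\beta^2\{[(i\nabla+\bA)^2+W]u\}(\beta x)$, so the unitary rescaling $L^2(B)\to L^2(B/\beta)$, $u\mapsto\beta^{d/2}u(\beta\cdot)$, which preserves Dirichlet conditions, intertwines $(i\nabla+\bA)^2+W$ on $B$ with $\beta^{-2}[(i\nabla+\bA_\beta)^2+W_\beta]$ on $B/\beta$; hence their $n$-th eigenvalues differ by a factor $\beta^2$. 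I apply this with the smooth potential $W=\xi^{(\beta\eps)}+C^{(\beta\eps)}$. The crucial point is the \emph{almost sure} identity $\xi^{(\eps)}_\beta(x)=\beta^2\xi^{(\beta\eps)}(\beta x)$ (this is the relation $\beta^2\xi^{(\eps)}(\beta\cdot)=\xi_\beta*\rho_{\eps/\beta}$ recorded above the statement, with $\eps$ there replaced by $\beta\eps$), which gives $W_\beta=\xi^{(\eps)}_\beta+\beta^2C^{(\beta\eps)}$, whereas the Wick constant attached to the rescaled model is $\tilde C^{(\eps)}:=\E|\nabla\rh^{(\eps)}_\beta(0)|^2=\beta^{4-d}C^{(\eps)}$ by a direct second-moment computation (using stationarity of $\xi_\beta$). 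Since $\Phi^B(\bA,Q^{(\beta\eps)})=(i\nabla+\bA)^2+\xi^{(\beta\eps)}+C^{(\beta\eps)}$ by the proof of Theorem~\ref{thm1}, and likewise $\Phi^{B/\beta}(\bA_\beta,Q^{(\eps)}_\beta)=(i\nabla+\bA_\beta)^2+\xi^{(\eps)}_\beta+\tilde C^{(\eps)}$, I obtain, almost surely,
\[\lambda_n^{B/\beta}(\bA_\beta,Q^{(\eps)}_\beta)=\beta^2\,\lambda_n^{B}(\bA,Q^{(\beta\eps)})+\bigl(\beta^{4-d}C^{(\eps)}-\beta^2C^{(\beta\eps)}\bigr).\]

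To conclude I would let $\eps\to0$. By Lemma~\ref{lem:conv_noise}, applied to $\xi$ and, in its evident analogue, to the rescaled noise $\xi_\beta$, one has $Q^{(\beta\eps)}\to Q$ and $Q^{(\eps)}_\beta\to Q_\beta$ in $\ccM$ in probability, so Corollary~\ref{cor:ev_continuity} gives convergence in probability of the two eigenvalue terms in the displayed identity. The additive term being deterministic, it must then converge to a constant $\delta_\beta$, and passing to the limit yields $\lambda_n^{B/\beta}(\bA_\beta,Q_\beta)=\beta^2\lambda_n^{B}(\bA,Q)+\delta_\beta$ almost surely (a fortiori in law), with $\delta_\beta=\lim_{\eps\to0}(\beta^{4-d}C^{(\eps/\beta)}-\beta^2C^{(\eps)})$ after the change of variables $\eps\mapsto\eps/\beta$. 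Finally, $\delta_\beta\asymp\beta^2\log\beta$ as $\beta\to0$ follows from the logarithmic divergence $C^{(\eps)}=\frac{1}{2\pi}\log(1/\eps)+O(1)$ in dimension $2$ (with $\nabla G(x)\sim-\frac{1}{2\pi}x/|x|^2$ near the origin), whose leading part cancels in $C^{(\eps)}-C^{(\beta\eps)}\to\frac{1}{2\pi}\log\beta$.

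I expect the main obstacle to be the bookkeeping of renormalisation constants in Part~2: one has to observe that the $\eps$-mollified rescaled noise $\xi^{(\eps)}_\beta$ is a rescaling of the \emph{original} noise mollified at the \emph{different} scale $\beta\eps$, that the Wick constant belonging to $Q_\beta$ is $\beta^{4-d}C^{(\eps)}$ rather than a rescaling of $C^{(\beta\eps)}$, and that the difference of these two constants converges only because their logarithmic divergences cancel. The remaining ingredients---the pointwise scaling identity for the operators, the two conjugations in Part~1, and the passage to the limit via continuity of the eigenvalue map---are routine.
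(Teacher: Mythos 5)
Your proof is correct and follows essentially the same route as the paper. In Part~1 you use exactly the paper's strategy: conjugate by the unitary translation and then by the phase $e^{i\bA(y)\cdot x}$ to absorb the constant shift $\bA(y)$ in the vector potential, then invoke translation invariance of $\xi$ in law plus the continuity of the eigenvalue map (Corollary~\ref{cor:ev_continuity}). In Part~2 your approach is the paper's, but your bookkeeping is slightly more careful and in fact cleaner than what is printed: you correctly observe the \emph{almost sure} scaling identity $\xi^{(\eps)}_\beta = \beta^2\xi^{(\beta\eps)}(\beta\cdot)$, which is exactly what lets the deterministic scaling relation $\lambda_n^{B/\beta}(\bA_\beta,V_\beta)=\beta^2\lambda_n^B(\bA,V)$ be applied pathwise; you then identify the Wick constant of the rescaled model as $\tilde C^{(\eps)}=\E|\nabla\rh^{(\eps)}_\beta(0)|^2=\beta^{4-d}C^{(\eps)}$ (rather than the $\beta^2 C^{(\eps/\beta)}$ appearing in the paper, which is the Wick constant at a shifted mollification scale --- the paper's in-law identities for $\rh^{(\eps)}_\beta$ and $\rZ^{(\eps)}_\beta$ appear to conflate the exponents $\eps$ and $\eps/\beta$, though this washes out in the limit). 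The resulting almost-sure identity $\lambda_n^{B/\beta}(\bA_\beta,Q^{(\eps)}_\beta)=\beta^2\lambda_n^B(\bA,Q^{(\beta\eps)})+\beta^{4-d}C^{(\eps)}-\beta^2C^{(\beta\eps)}$, followed by passage to the limit via Lemma~\ref{lem:conv_noise} and Corollary~\ref{cor:ev_continuity} and a change of variable $\eps\mapsto\eps/\beta$, recovers exactly the paper's formula $\delta_\beta=\lim_{\eps\to0}[\beta^{4-d}C^{(\eps/\beta)}-\beta^2C^{(\eps)}]$, and the asymptotic $\delta_\beta\asymp\beta^2\log\beta$ follows as you say from the logarithmic divergence of $C^{(\eps)}$ in $d=2$. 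In short: same method, with your version resolving a minor inconsistency in the paper's intermediate expressions.
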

\begin{proof}
	To show 1., we denote by $\tau_y$ the operator of translation by $y$, i.e., to every function $u$ one associates the function $\tau_y u$ given by $\tau_y u(x) = u(y + x)$. One can define $\tau_y \zeta$ by the usual extension to distributions. By linearity of $\bA$, we see that $\tau_y \bA(x) = \bA(x + y) = \bA(x) + \bA(y)$ for all $x, y \in \R^2$. Now observe that for any differentiable function $\phi: \R^2 \to \R$, we have $e^{i\phi} (i\nabla + \bA)^2 e^{-i\phi} = (i\nabla + \bA + \nabla \phi)^2$. In particular, by posing $\phi_y(x) = \bA(y) \cdot x$ for $x, y \in \R^2$, one has $(i\nabla + \tau_y \bA)^2 = e^{i\phi_y} (i\nabla + \bA)^2 e^{-i\phi_y}$, leading to
		\begin{equs}
			\Phi^{y + B}(\bA, h_\eps, \zeta_\eps) &= \tau_y^{-1} \Phi^B(\tau_y \bA, \tau_y h_\eps, \tau_y \zeta_\eps) \tau_y = \tau_y^{-1} e^{i\phi_y} \Phi^B(\bA, \tau_y h_\eps, \tau_y \zeta_\eps) e^{-i\phi_y} \tau_y.
		\end{equs}
	As the operator $u \mapsto e^{-i\phi_y} \tau_y u$ is unitary, we deduce that the spectrum of the operators on the both sides of the above equality coincide. As the law of white noise is translational invariant (hence so does that of $h_\eps$ and $\zeta_\eps$), we conclude that $(\lambda_n^{y + B}(\bA, h_\eps, \zeta_\eps))_n = (\lambda_n^{B}(\bA, h_\eps, \zeta_\eps))_n$ in law. The desired result then follows from the continuity of $\lambda$ with respect to $(h, \zeta) \in \ccM$.
	
	We now turn to the point 2. By the fact that $\Phi^{B/\beta}(\bA, \rh^{(\eps)}_\beta, \rZ^{(\eps)}_\beta) = (i\nabla + \bA_\beta)^2 + \xi^{(\eps)}_\beta + \tilde C^{(\eps)}$, where $\tilde C^{(\eps)} = \beta^2 C^{(\eps/\beta)}$, and the computation in the beginning of this subsection, we deduce
	\[\lambda_n^{B/\beta}(\bA, \rh^{(\eps)}_\beta, \rZ^{(\eps)}_\beta) = \beta^2 \lambda_n^B(\bA, h_\eps, \zeta_\eps) + \tilde C^{(\eps)} - \beta^2 C^{(\eps)}.\]
	Since we know that $(\rh^{(\eps)}_\beta, \rZ^{(\eps)}_\beta)$ converges in $\ccM$ and in probability to the couple $(\rh_\beta, \rZ_\beta)$, the desired result then follows by the continuity of $\lambda$ on $\ccM$. The expression for $\delta_\beta$ is then given by $\delta_\beta = \lim_{\eps \to 0} (\tilde C^{(\eps)} - \beta^2 C^{(\eps)})$ and its asymptotics in $\beta$ follows from the fact $\tilde C^{(\eps)} \asymp \beta^2 \log(\beta/\eps)$.
\end{proof}

\subsubsection{Large deviation principle}
With the couple $Q_\beta = (\rh_\beta, \rZ_\beta)$ defined in the previous subsection, we are interested in the large deviation behaviour of the corresponding eigenvalue $\lambda^B(0, Q_\beta)$ on a fixed box $B$, as $\beta \to 0$.

The following result is an application of the well-known large deviation principle (LDP) for Wiener chaos \cite{Led90} and the equalities in law \eqref{eq:equality_law_h}, \eqref{eq:equality_law_zeta}.
\begin{lemma}\label{lem:LDP_wiener-chaos}
	Fix a bounded open box $B$. As $\beta \to 0$, the law of $Q_\beta = (\rh_\beta, \rZ_\beta)$ induced on $\ccM(B)$ satisfies a LDP with rate $\beta^{2}$ and rate function $\cI$ given by
	\[\cI(q) = \frac12 \norm{\varphi}_{L^2(B)}^2, \quad \text{for $q = (G\ast\varphi, -|\nabla G\ast\varphi|^2 - F\ast\varphi) \in \ccM$ with $\varphi \in L^2(B)$},\]
	and $\cI(q) = + \infty$ otherwise. More precisely, it means that for any open and closed set $O, F$ in $\ccM(B)$, one has the inequalities
	\begin{equs}
		\liminf_{\beta \to 0} \beta^{2} \log \P(Q_\beta \in O) &\geq -\inf_{q \in O} \cI(q),\\
		\limsup_{\beta \to 0} \beta^{2} \log \P(Q_\beta \in F) &\leq -\inf_{q \in F} \cI(q).
	\end{equs}
\end{lemma}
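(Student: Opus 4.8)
The plan is to deduce the stated LDP from the classical large deviation principle for fixed-order Wiener chaos \cite{Led90}, using the equalities in law \eqref{eq:equality_law_h}--\eqref{eq:equality_law_zeta} to express $Q_\beta$ as a \emph{fixed continuous image} of a rescaled Wiener chaos, and then transporting the principle through the contraction principle.

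Concretely, I would first record that, thanks to \eqref{eq:equality_law_h}, \eqref{eq:equality_law_zeta} and the $2$-homogeneity of the Wick square (so that $:|\nabla(\beta G*\xi)|^2: = \beta^2 :|\nabla G*\xi|^2:$), the law of $Q_\beta$ on $\ccM(B)$ coincides with that of $\Lambda(\Xi_\beta)$, where
\[\Xi_\beta := \bigl(\beta\xi,\; \beta^2 :|\nabla G*\xi|^2:\bigr), \qquad \Lambda(\psi,w) := \bigl(G*\psi,\; -w - F*\psi\bigr).\]
Here $\Xi_\beta$ is naturally valued in $\cC^{-1-\kappa}(B')\times\cC^{-\kappa}(B')$ for a large enough bounded box $B'\supset\bar B$, and $\Lambda$ is an affine \emph{continuous} map into $\ccM(B) = \cC^{1-\kappa}(B)\times\cC^{-\kappa}(B)$: continuity follows from the Schauder estimate for the kernel $G$ (which has the regularity of the two-dimensional Green function, so $G*{\cdot}$ gains two derivatives) together with the smoothing effect of convolution with $F\in C^\infty_c$. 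The two components of $\Xi_\beta$ are the $\beta$- and $\beta^2$-rescalings of a first- and a second-order homogeneous Wiener chaos, which are precisely the scalings under which each obeys an LDP with common rate $\beta^2$.

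The key step, and the one I expect to be the main obstacle, is to establish that $\Xi_\beta$ itself satisfies an LDP with rate $\beta^2$ and rate function $\hat\cI(\psi,w) = \tfrac12\|\psi\|_{L^2}^2$ when $\psi\in L^2$ and $w=|\nabla G*\psi|^2$, and $\hat\cI=+\infty$ otherwise. The first component alone obeys Schilder's theorem, but $\Xi_\beta$ is \emph{not} a continuous function of $\beta\xi$, since $\psi\mapsto :|\nabla G*\psi|^2:$ is only defined as an $L^2$-limit over mollifications, so the naive contraction principle does not apply. This is exactly the difficulty resolved in \cite{Led90}: one proves an exponential equivalence between $:|\nabla G*\xi|^2:$ and its mollified approximations $:|\nabla G*\xi^{(\eps)}|^2:$, uniformly in $\eps$, using hypercontractivity (equivalence of moments) of Gaussian chaos; the Wick renormalisation is precisely what makes the zeroth-order chaos drop out, so that the deterministic skeleton of $:|\nabla G*\xi|^2:$ at a Cameron--Martin element $\psi$ is the ordinary square $|\nabla G*\psi|^2$, which accounts for the effective domain of $\hat\cI$. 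I would invoke \cite{Led90} for this rather than reprove it, as the statement of the lemma suggests.

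Finally, granting the LDP for $\Xi_\beta$, the contraction principle applied to the continuous map $\Lambda$ gives the LDP for $Q_\beta=\Lambda(\Xi_\beta)$ on $\ccM(B)$ with rate $\beta^2$ and rate function $\cI(q)=\inf\{\hat\cI(\psi,w):\Lambda(\psi,w)=q\}$. It remains to identify $\cI$ with the claimed expression, which is a short deterministic computation: if $q=(G*\varphi,\,-|\nabla G*\varphi|^2-F*\varphi)$ with $\varphi\in L^2(B)$, then any admissible $(\psi,w)$ has $w=|\nabla G*\psi|^2$, $G*\psi=G*\varphi$ and $-|\nabla G*\psi|^2-F*\psi=-|\nabla G*\varphi|^2-F*\varphi$; the first two relations give $F*\psi=F*\varphi$, and applying $-\Delta$ to $G*\psi=G*\varphi$ (recall $-\Delta G=\delta+F$) forces $\psi=\varphi$ on $B$, so $\|\psi\|_{L^2}^2\ge\|\varphi\|_{L^2(B)}^2$ with equality attained. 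Hence $\cI(q)=\tfrac12\|\varphi\|_{L^2(B)}^2$, while $\cI(q)=+\infty$ for $q$ not of this form, and the stated upper and lower large deviation inequalities follow.
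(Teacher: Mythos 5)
Your proposal is correct and takes essentially the same route as the paper, which itself gives no written proof of this lemma but simply remarks that it is ``an application of the well-known large deviation principle (LDP) for Wiener chaos \cite{Led90} and the equalities in law'' for $(\rh_\beta,\rZ_\beta)$. The decomposition $Q_\beta=\Lambda(\Xi_\beta)$ with $\Lambda$ a fixed continuous affine map, the appeal to \cite{Led90} for the LDP of the rescaled chaos pair, the contraction principle, and the identification of the rate function via $-\Delta G=\delta+F$ are exactly the steps the author is implicitly invoking.
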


\begin{remark}
	Note that the rate function $\cI$ is good in the sense of \cite{DZ10} as the sublevel sets $\{\cI \leq c\}, c \in \R$ are either empty or a closed ball in $\cH^{2}(B) \times \cH^{1}(B)$, which can be embedded compactly into $\ccM(B)$.
\end{remark}

\begin{proposition}[Large deviation]\label{prop:LDP}
	Fix a bounded open box $B$. As $\beta \to 0$, the sequence of random variables $\lambda_1^B(0, \rh_\beta, \rZ_\beta)$ satisfies a LDP with rate $\beta^{2}$ and the good rate function \footnote{Given any bounded open box $B$ and $\varphi \in L^2(B)$, the operator $-\Delta + \varphi$ (with zero Dirichlet b.c. on $B$) is self-adjoint on $L^2(B)$ with compact resolvent. This can be seen by the construction given in Section \ref{sec:construction}. Indeed, it coincides with $\Phi^B(0,q)$ with $q = (G\ast\varphi, -F\ast\varphi - |\nabla G\ast\varphi|^2) \in \ccM$.}
	\begin{equ}\label{eq:rate_function}
		I_B(x) = \inf\left\{\frac12 \norm{\varphi}_{L^2(B)}^2: \text{$\varphi \in L^2(B)$ s.t. $x$ is the lowest e.v. of $-\Delta + \varphi$}\right\}, \quad x \in \R,
	\end{equ}
	with the convention $\inf \emptyset = \infty$.
\end{proposition}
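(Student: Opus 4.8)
The plan is to derive Proposition~\ref{prop:LDP} from Lemma~\ref{lem:LDP_wiener-chaos} by the contraction principle. The key observation is that the map $q \mapsto \lambda_1^B(0, q)$ is continuous from $\ccM(B)$ to $\R$ (this is a special case of Corollary~\ref{cor:ev_continuity}, or follows directly from Proposition~\ref{prop:semigroup} item~2 together with the spectral correspondence $\lambda_1 = -\tfrac1t \log \mu_1$). Hence, writing $\Psi(q) := \lambda_1^B(0,q)$, the random variable $\lambda_1^B(0, \rh_\beta, \rZ_\beta)$ is the pushforward $\Psi_*(\text{law of } Q_\beta)$, and the contraction principle (see \cite[Theorem~4.2.1]{DZ10}) immediately yields that it satisfies an LDP with rate $\beta^2$ and rate function
\[\tilde I_B(x) = \inf\{\cI(q) : q \in \ccM(B),\ \Psi(q) = x\}.\]

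It then remains to identify $\tilde I_B$ with the rate function $I_B$ in \eqref{eq:rate_function}. By the definition of $\cI$ in Lemma~\ref{lem:LDP_wiener-chaos}, the infimum defining $\tilde I_B(x)$ is effectively over those $q$ of the form $q_\varphi := (G*\varphi, -|\nabla G*\varphi|^2 - F*\varphi)$ with $\varphi \in L^2(B)$, and $\cI(q_\varphi) = \tfrac12\norm{\varphi}_{L^2(B)}^2$. By Corollary~\ref{cor:norm-resolvent_conv} (applied after the usual smooth approximation, as in the footnote to the statement), one has $\Phi^B(0, q_\varphi) = -\Delta + \varphi$, so the constraint $\Psi(q_\varphi) = x$ is precisely that $x$ be the lowest eigenvalue of $-\Delta + \varphi$ with zero Dirichlet boundary conditions on $B$. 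Substituting this into the expression for $\tilde I_B$ gives exactly \eqref{eq:rate_function}. One should also check that $\tilde I_B$ is a \emph{good} rate function: since $\cI$ is good (as noted in the Remark following Lemma~\ref{lem:LDP_wiener-chaos}) and $\Psi$ is continuous, the sublevel sets $\{\tilde I_B \le c\} = \Psi(\{\cI \le c\})$ are continuous images of compact sets, hence compact.

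The mild subtlety — which I expect to be the main (though still minor) point requiring care — is the well-definedness of the map $\varphi \mapsto (-\Delta + \varphi$ with Dirichlet b.c.$)$ and its lowest eigenvalue for a merely $L^2(B)$ potential $\varphi$, which is what is needed to make sense of \eqref{eq:rate_function}. This is precisely the content of the footnote: for $\varphi \in L^2(B)$ the field $G*\varphi$ lies in $\cC^{1-\kappa}(B)$ (in fact better) and $|\nabla G*\varphi|^2 + F*\varphi \in \cC^{-\kappa}(B)$, so $q_\varphi \in \ccM(B)$ genuinely, and $\Phi^B(0, q_\varphi)$ is the self-adjoint operator with compact resolvent furnished by Section~\ref{sec:construction}; that it equals $-\Delta + \varphi$ in the classical sense when $\varphi$ is smooth, and hence deserves that name in general, is Corollary~\ref{cor:norm-resolvent_conv}. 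Apart from spelling this out, the proof is a direct application of the contraction principle and should be short.
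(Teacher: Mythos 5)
Your proof is correct and follows essentially the same route as the paper's, which simply invokes Lemma~\ref{lem:LDP_wiener-chaos}, the contraction principle, and the continuity of $q\mapsto\lambda_1^B(0,q)$ from Corollary~\ref{cor:ev_continuity}. You merely spell out in detail the identification of the contracted rate function with $I_B$ (via the cancellation $-\Delta(G*\varphi)-F*\varphi=\varphi$ in Corollary~\ref{cor:norm-resolvent_conv}) and the goodness check, both of which the paper leaves implicit.
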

\begin{proof}
	This is a consequence of Lemma \ref{lem:LDP_wiener-chaos}, the contraction principle \cite[Theorem 4.2.1]{DZ10} and the continuity of eigenvalues on $\ccM$ (Corollary \ref{cor:ev_continuity}).
%
\end{proof}

\subsection{Proof of the tail bound Theorem \ref{thm:ev_tail_estimate}}\label{sec:tail_bound}
We can provide a proof for the tail bounds claimed in Theorem \ref{thm:ev_tail_estimate}.
Since the proof is identical for any $z \in \R^2$ at which the open box $B_{z, L}$ is centred, we will fix $z = 0$ in this proof and use the shorthand $\lambda^{L}_n := \lambda^{B_{0, L}}_n$.
Recall the $\ccM$-valued random variable $Q_\beta = (\rh_\beta, \rZ_\beta)$ defined in Section \ref{sec:ingredients}.

Let $r \in (2, L]$ be a sufficiently large constant which will be determined at the end of the proof. Denote $\beta = 1/\sqrt{x}$. Using the assumption $\frac{5r}{\sqrt{x}} \leq L$, we have $r \le L/\beta$. By Proposition \ref{prop:landau_scaling}, \ref{prop:boxes} as well as the independence of $Q_\beta$ on the boxes $B(kr, \frac{r}{2})$ (which follows from Lemma \ref{lem:independence}, Remark \ref{rem:independence_Qbeta}, and the fact that the minimum distance between two such boxes is $r/2 > 1$), one deduces
\begin{equs}
	\P(-\lambda_1^L(\bA, Q) \ge x) &= \P(\beta^2\lambda_1^L(\bA, Q) \le -1)\\
	&= \P(\lambda_1^{L/\beta}(\bA_\beta, Q_\beta) - \delta_\beta \le -1) \begin{cases}
		\leq \displaystyle \sum_{k \in N_2(L/\beta, r)} q_{kr, r}(\beta) \\ \\
		\geq \displaystyle 1 - \prod_{k \in N_1(L/\beta, r)} \left[1 - p_{kr, r}(\beta)\right]
	\end{cases}
\end{equs}
where we have used the shorthand
\begin{equs}
	p_{y, r}(\beta) &= \P(\lambda_1^{B_{y, \frac{r}{2}}}(\bA_\beta, Q_\beta) - \delta_\beta \le -1),\\
	q_{y, r}(\beta) &= \P(\lambda_1^{B_{y, \frac{3r}{2}}}(\bA_\beta, Q_\beta) - \frac{K}{r^2} - \delta_\beta \le -1),
\end{equs}
for $y \in \R^2$ and $r \in (2, L/\beta]$.
We now want to estimate the upper and lower bounds by Proposition \ref{prop:Landau-Anderson} and the LDP (Proposition \ref{prop:LDP}). By the lower bound of Proposition \ref{prop:Landau-Anderson}, we see that
\[q_{y, r}(\beta) \leq \P(\lambda_1^{B_{y, \frac{3r}{2}}}(0, Q_\beta) - \frac{K}{r^2} - \delta_\beta \le -1) = \P(\lambda_1^{\frac{3r}{2}}(0, Q_\beta) - \frac{K}{r^2} - \delta_\beta \le -1),\]
where we have used the item 1. of Proposition \ref{prop:semigroup} (with $\bA = 0$) in the second equality. Notice that $\delta_\beta \to 0$ deterministically, implying that the laws of $\lambda_1^{\frac{3r}{2}}(0, Q_\beta) - \delta_\beta$ and $\lambda_1^{\frac{3r}{2}}(0, Q_\beta)$ are exponentially equivalent and hence they satisfy the same LDP (see \cite[Definition 4.2.10, Theorem 4.2.13]{DZ10}). In particular, $\lambda_1^{\frac{3r}{2}}(0, Q_\beta) - \delta_\beta$ follows a LDP with rate $\beta^2$ and a good rate function $I_{\frac{3r}{2}} := I_{B_{0,\frac{3r}{2}}}$ given by \eqref{eq:rate_function}.

Similarly, the upper bound of Proposition \ref{prop:Landau-Anderson} implies, for $k \in N_1$,
\begin{equs}
	p_{kr, r}(\beta) \ge \P(\lambda_1^{\frac{r}{2}}(0, Q_\beta) + X_{k,r,\beta} \le -1),
\end{equs}
where
\[X_{k,r,\beta} = cA_{k, r, \beta} (M_{\rh_\beta}^{B_{0, \frac{r}{2}}})^2 (|\lambda_1^{\frac{r}{2}}(0, Q_\beta)|^a + C^{B_{0,\frac{r}{2}}}(0, Q_\beta)^b) + A_{k, r, \beta}^2 - \delta_\beta,\]
with $A_{k, r, \beta} = \sup \{|\bA_\beta(x) - \bA_\beta(kr)| : x \in B_{kr, \frac{r}{2}}\}$. Here, we have used the invariance in law
\[(\lambda_1^{B_{kr, \frac{r}{2}}}(0, Q_\beta), M_{\rh_\beta}^{B_{kr, \frac{r}{2}}}, C^{B_{kr,\frac{r}{2}}}(0, Q_\beta)) = (\lambda_1^{\frac{r}{2}}(0, Q_\beta), M_{\rh_\beta}^{B_{0, \frac{r}{2}}}, C^{B_{0,\frac{r}{2}}}(0, Q_\beta)).\]
We argue that the laws of $\lambda_1^{\frac{r}{2}}(0, Q_\beta) + X_{k, r, \beta}$ and $\lambda_1^{\frac{r}{2}}(0, Q_\beta)$ are also exponentially equivalent uniformly for $k \in N_1$. This is the content of the following lemma.
\begin{lemma}\label{lem:LDP_exp-equiv}
	Let $r > 0$ be fixed. For all $\delta > 0$,
	\[\limsup_{\beta \to 0}\sup_{k \in N_1} \beta^{2}\log \P(|X_{k, r, \beta}| > \delta) = -\infty.\]
	In particular, $\lambda_1^{\frac{r}{2}}(0, Q_\beta) + X_{k, r, \beta}$ satisfies a uniform (for $k \in N_1$) LDP with rate $\beta^2$ and the good rate function $I_{\frac{r}{2}} := I_{B_{0, \frac{r}{2}}}$ defined by \eqref{eq:rate_function}.
\end{lemma}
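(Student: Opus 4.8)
The plan is to split off from $X_{k,r,\beta}$ all the randomness-free scalars, so that what remains is a single continuous functional of $Q_\beta$ to which the large deviation principle of Lemma~\ref{lem:LDP_wiener-chaos} applies. Write
\[
X_{k,r,\beta}=A_{k,r,\beta}\,\Xi(Q_\beta)+A_{k,r,\beta}^{2}-\delta_\beta,\qquad
\Xi(q):=c\,(M_h^{B_{0,r}})^{2}\big(|\lambda_1^{B_{0,r}}(0,q)|^{a}+C^{B_{0,r}}(0,q)^{b}\big),
\]
for $q=(h,\zeta)\in\ccM(B_{0,r})$, with $a,b,c$ the constants of Proposition~\ref{prop:Landau-Anderson}; here $\Xi(Q_\beta)$ reproduces the bracketed term since $Q_\beta=(\rh_\beta,\rZ_\beta)$. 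The map $\Xi\colon\ccM(B_{0,r})\to[0,\infty)$ is continuous: $h\mapsto M_h^{B_{0,r}}=\norm{e^{h}}_{\cC^{1-\kappa}(B_{0,r})}\norm{e^{-h}}_{\cC^{1-\kappa}(B_{0,r})}$ is continuous because composition with $\exp$ is continuous on $\cC^{1-\kappa}(B_{0,r})$; $q\mapsto C^{B_{0,r}}(0,q)=2\norm{\nabla h}_{\cC^{-\kappa}(B_{0,r})}+\norm{\zeta}_{\cC^{-\kappa}(B_{0,r})}$ is Lipschitz; and $q\mapsto\lambda_1^{B_{0,r}}(0,q)$ is continuous by Corollary~\ref{cor:ev_continuity}. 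Thus the $k$-dependence of $X_{k,r,\beta}$ sits entirely in the deterministic scalar $A_{k,r,\beta}$.

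The first step, and the one I expect to require the most care, is the uniform deterministic bound $\sup_{k\in N_1}A_{k,r,\beta}\to0$. Since $\bA_\beta(x)=\beta\bA(\beta x)$, one has $A_{k,r,\beta}=\beta\sup_{x\in B_{kr,r}}|\bA(\beta x)-\bA(\beta kr)|$; as $k\in N_1(L/\beta,r)$ forces $B_{\beta kr,\beta r}\subseteq B_{0,L}$ and $|\beta x-\beta kr|\le\beta r$ on $B_{kr,r}$, the $(1-\kappa)$-Hölder continuity of $\bA$ gives $A_{k,r,\beta}\le r^{1-\kappa}\norm{\bA}_{\cC^{1-\kappa}(B_{0,L})}\beta^{2-\kappa}$. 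By Assumption~\ref{ass:A} together with the admissible window $L\le x^{x}=\beta^{-2/\beta^{2}}$ of Theorem~\ref{thm:ev_tail_estimate} (so $\log(1+L)\lesssim\beta^{-2}\log(1/\beta)$), this yields $\sup_{k\in N_1}A_{k,r,\beta}\le\epsilon_\beta$ with $\epsilon_\beta\lesssim r^{1-\kappa}(\log(1/\beta))^{\gamma}\beta^{2-\kappa-2\gamma}$, and $\epsilon_\beta\to0$ because $\kappa<1-\gamma$ forces $2-\kappa-2\gamma>0$. Since also $\delta_\beta\asymp\beta^{2}\log\beta\to0$ (Proposition~\ref{prop:landau_scaling}), for small $\beta$ we have $\sup_{k\in N_1}(A_{k,r,\beta}^{2}+|\delta_\beta|)\le\delta/2$, hence, uniformly in $k$,
\[
\P(|X_{k,r,\beta}|>\delta)\le\P\big(\epsilon_\beta\,\Xi(Q_\beta)>\tfrac{\delta}{2}\big)=\P\big(\Xi(Q_\beta)\ge\tfrac{\delta}{2\epsilon_\beta}\big).
\]
The delicate point is exactly the competition between the gain $\beta^{2-\kappa}$ from rescaling and the loss $\norm{\bA}_{\cC^{1-\kappa}(B_{0,L})}$ coming from the growing ambient box $B_{0,L}$; this is precisely what Assumption~\ref{ass:A} and the restriction $L\le x^{x}$ are there to control.

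It then suffices to show $\limsup_{\beta\to0}\beta^{2}\log\P(\Xi(Q_\beta)\ge\delta/(2\epsilon_\beta))=-\infty$. For fixed $M>0$ the set $\{q:\Xi(q)\ge M\}$ is closed in $\ccM(B_{0,r})$, so the upper bound in Lemma~\ref{lem:LDP_wiener-chaos} gives $\limsup_{\beta\to0}\beta^{2}\log\P(\Xi(Q_\beta)\ge M)\le-j(M)$ with $j(M):=\inf\{\cI(q):\Xi(q)\ge M\}$. I would then verify $j(M)\to\infty$: otherwise there are $M_{n}\to\infty$ and $q_{n}$ with $\Xi(q_{n})\ge M_{n}$ but $\sup_{n}\cI(q_{n})<\infty$, so by goodness of $\cI$ (the sublevel sets of $\cI$ are compact in $\ccM(B_{0,r})$, see the remark following Lemma~\ref{lem:LDP_wiener-chaos}) a subsequence $q_{n_{j}}$ converges to some $q'$, and continuity of $\Xi$ forces $M_{n_{j}}\le\Xi(q_{n_{j}})\to\Xi(q')<\infty$, a contradiction. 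Finally, since $\delta/(2\epsilon_\beta)\to\infty$, for any fixed $\theta>0$ we have $\delta/(2\epsilon_\beta)\ge\theta$ for all small $\beta$, whence $\limsup_{\beta\to0}\sup_{k\in N_1}\beta^{2}\log\P(|X_{k,r,\beta}|>\delta)\le-j(\theta)$; letting $\theta\to\infty$ gives $-\infty$, which is the first assertion.

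For the ``in particular'' part, the estimate just established is precisely (\cite[Definition~4.2.10]{DZ10}) the assertion that the families $\lambda_1^{r}(0,Q_\beta)+X_{k,r,\beta}$ and $\lambda_1^{r}(0,Q_\beta)$ are exponentially equivalent at speed $\beta^{2}$, uniformly over $k\in N_1$. Since $\lambda_1^{r}(0,Q_\beta)=\lambda_1^{B_{0,r}}(0,Q_\beta)$ satisfies an LDP at speed $\beta^{2}$ with good rate function $I_{r}=I_{B_{0,r}}$ by Proposition~\ref{prop:LDP}, \cite[Theorem~4.2.13]{DZ10} transfers this LDP to $\lambda_1^{r}(0,Q_\beta)+X_{k,r,\beta}$; tracing that argument, the only $k$-dependent input is the bound on $\P(|X_{k,r,\beta}|>\delta)$, which we controlled uniformly in $k$, so the resulting upper and lower LDP bounds hold uniformly in $k\in N_1$.
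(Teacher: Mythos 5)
Your proposal is correct and follows essentially the same route as the paper: the identical uniform bound on $A_{k,r,\beta}$ via the Hölder norm of $\bA$ on $B_{0,L}$, Assumption~\ref{ass:A}, and the window $L\le x^{x}$; the same factoring of $X_{k,r,\beta}$ into a deterministic scalar times a continuous functional of $Q_\beta$; the same LDP-based tail estimate; and the same exponential-equivalence conclusion. The only (cosmetic) variation is how you show the relevant infimum diverges as $M\to\infty$: the paper passes through the contraction principle to get a rate function $\cI'$ for $f(Q_\beta)$ and argues via local boundedness of $f$, whereas you apply the LDP upper bound directly to the closed sets $\{\Xi\ge M\}$ and invoke goodness of $\cI$ (compact sublevel sets) plus continuity of $\Xi$ — both are standard and valid.
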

\begin{proof}
	The key observation is the following uniform bound for $A_{k, r, \beta}$: notice that $B_{\beta kr, \beta r/2} \subset B_{0, L}$, hence under Assumption \ref{ass:A} and the condition $L \leq x^x$, or equivalently $\log L \leq \beta^{-2} \log(\beta^{-2})$, one has the bound
	\begin{equs}\label{eq:X}
		A_{k, r, \beta} \lesssim [\log (1 + L)]^\gamma \beta (\beta|x - kr|)^{\alpha} \lesssim r^{\alpha} \beta^{1+\alpha - 2\gamma} [\log(\beta^{-2})]^\gamma,
	\end{equs}
	uniformly for $k \in N_1(L/\beta, r)$ and for $\beta$ sufficiently small. Since we have imposed $1+\alpha - 2\gamma > 0$, the sequence $A_{k, r, \beta}$ converges to $0$ as $\beta \to 0$ uniformly for $k \in N_1(L/\beta, r)$.
	
	Notice that there exists some continuous function $f: \ccM \to \R_+$ such that
	\[X_{k, r, \beta} = c A_{k, r, \beta} f(Q_\beta) + A_{k, r, \beta}^2 - \delta_\beta.\]
	Consequently, by Proposition \ref{lem:LDP_wiener-chaos} and the contraction principle, we deduce that $f(Q_\beta)$ satisfies a LDP with rate $\beta^2$ and the rate function
	\[\cI'(x) = \inf \left\{\frac12 \norm{\varphi}_{L^2(B_{0, \frac{r}{2}})}^2: \varphi \in L^2(B_{0, \frac{r}{2}}), f(G\ast\varphi, -|\nabla G\ast\varphi|^2 - F\ast\varphi) = x\right\}, \quad x \in \R.\]
	Now fix arbitrarily $M > 0$. By \eqref{eq:X}, for $\beta$ sufficiently small, it holds
	\[\frac{\delta + \delta_\beta - A_{k, r, \beta}^2}{cA_{k, r, \beta}} > M,\]
	uniformly for $k \in N_1$. Hence, one has
	\begin{equ}[eq:LDP_exp-equiv_1]
		\limsup_{\beta \to 0} \sup_{k \in N_1} \beta^2 \log\P(X_{k, r, \beta} > \delta) \leq \limsup_{\beta \to 0} \beta^2 \log \P(f(Q_\beta) > M) \leq -\inf \cI'([M, \infty)).
	\end{equ}
	(The same bound for $\P(X_\beta < -\delta)$ can be obtained similarly.)
	
	Moreover, by Corollary \ref{cor:ev_continuity} and the definition of $M_{h}^{B_{0, \frac{r}{2}}}$ and $C^{B_{0,\frac{r}{2}}}(0, q)$, the function $f$ is locally bounded, namely has the property that
	\[\forall R> 0, \exists C_R>0, \quad \sup_{\norm{q}_{\ccM(B_{0, \frac{r}{2}})} \leq R} |f(q)| \leq C_R.\]
	It follows that $\sup_M \inf \cI'([M, \infty)) = + \infty$. (If not, set $R = \sup_M \inf \cI'([M, \infty)) < \infty$ and notice that for all $M > 0$, there exists $q \in \ccM$ such that $\norm{q}_{\ccM(B_{0, \frac{r}{2}})} \leq R$ and $|f(q)| \ge M$, which is absurd.) Since the bound \eqref{eq:LDP_exp-equiv_1} holds for all $M > 0$, the proof for the first assertion is then completed by taking a infimum of the right-hand side. For the "in particular" part of the statement, the proof is standard and follows from the fact that $I_{\frac{r}{2}}$ is a good rate function.
\end{proof}

We are now in a position to apply the LDPs. Set $\rho_r(a) := \inf \{I_r(x) : x \in (-\infty, a) \}$ and $\rho'_r(a) = \inf \{I_{r}(x) : x \in (-\infty, a] \}$ for $a \in \R$ and $r>0$. We now let $\eta \in (0, 1)$ be given and fix an arbitrary $\eps \in (0, \eta)$. By Proposition \ref{prop:LDP} and Lemma \ref{lem:LDP_exp-equiv}, there exists $\beta_0 > 0$ such that for all $\beta \leq \beta_0$
\begin{equs}
	p_{kr, r}(\beta) \ge e^{-\beta^{-2}\rho_{r/2}(-1)(1+\eps)}, \quad q_{kr, r}(\beta) \leq e^{-\beta^{-2} \rho_{\frac{3r}{2}}'(-1+\frac{K}{r^2}) (1-\eps)},
\end{equs}
where the inequality for $p_{kr, r}$ holds uniformly for $k \in N_1$ while that for $q_{kr, r}$ holds uniformly for $k \in N_2$.
Note that, provided $\frac{L}{\beta r} \ge 5$, the cardinalities of $N_1(L/\beta, r)$, $N_2(L/\beta, r)$ verify
\begin{equ}
	|N_1(L/\beta, r)| \ge \frac{1}{2} \left(\frac{L}{\beta r}\right)^2, \quad |N_2(L/\beta, r)| \le 2 \left(\frac{L}{\beta r}\right)^2\;.
\end{equ}
Now by $1 - y \leq e^{-y}$ for all $y \ge 0$, one has
\begin{equs}[eq:pq_bounds]
	\sum_{k \in N_2} q_{kr, r}(\beta) &\leq 2\left(\frac{L}{\beta r}\right)^2 e^{-\beta^{-2} \rho_{\frac{3r}{2}}'(-1+\frac{K}{r^2}) (1-\eps)},\\
	1 - \prod_{k \in N_1} \left[1 - p_{kr, r}(\beta)\right] &\ge 1- \exp\left(-\frac{1}{2}\left(\frac{L}{\beta r}\right)^2 e^{-\beta^{-2}\rho_r(-1)(1+\eps)}\right).
\end{equs}

Define $\rho = \inf_{r > 0} \rho_r(-1)$. In fact, the constant $\rho$ can be evaluated as $\rho = 2C_{\mathrm{GN}}^{-1}$, where $C_{\mathrm{GN}}$ is the constant given by \eqref{eq:Gagliardo--Nirenberg}. We refer the readers to \cite[Proposition 2.8]{HL23} or \cite[Theorem 2.6]{CvZ21} for a proof of this fact. The only thing important to us is that $\rho$ is a positive constant. Additionally, we have by definitions $\rho \leq \rho_{\frac{3r}{2}}(-1) \leq \rho_{\frac{3r}{2}}'(-1+\frac{K}{r^2})$ for any $r > 0$.

As we have chosen $0 < \eps < \eta$, we can specify a $r$ sufficiently large so that $\rho (1+\eta) > \rho_r(-1) (1+\eps)$. In particular, for such $r > 0$, one has
\[- \rho'_{\frac{3r}{2}}(-1+\frac{K}{r^2}) (1-\eps) < -\rho(1-\eta), \quad - \rho_{\frac{r}{2}}(-1)(1+\eps) > -\rho(1+\eta).\]
Therefore, from \eqref{eq:pq_bounds} one deduces
\begin{equ}
	1- \exp\left(-\frac{1}{2}\left(\frac{L}{\beta r}\right)^2 e^{-\beta^{-2}\rho(1+\eta)}\right) \leq \P(-\lambda_1^L(\bA, Q) \ge x) \leq 2\left(\frac{L}{\beta r}\right)^2 e^{-\beta^{-2} \rho (1-\eta)},
\end{equ}
for all $\beta \leq \beta_0$. We conclude the proof by defining $x_0 = \beta_0^{-2}$ and substituting $\beta = x^{-\frac12}$.

\section{Generalisations and open questions}\label{sec:outlook}
We conclude with a heuristic discussion of several natural extensions of the present work, in increasing order of distance from our setting.

\paragraph{Dimension three.} In dimension $3$, the exponential transformation no longer removes all singular products, and the construction of the semigroup would require, for example, the theory of regularity structures. We expect, however, that the strategy of Section \ref{sec:construction} carries over: one lifts the mild formulation \eqref{eq:Landau_mild} to a fixed point problem in a space of Besov-based modelled distributions (for which the associated reconstruction theorem, the embeddings and the Schauder-type estimates are provided in \cite{HL17}) and feeds in the renormalised model for $-\Delta + \xi$ on boxes constructed in \cite{Lab19}. Upon reconstructing the solution in the Sobolev scale, the extraction of the semigroup and of its generator via the Hille--Yosida theorem is insensitive to the dimension. The treatment of the Dirichlet boundary condition in the parabolic setting might be the step requiring adaptation; cf. \cite{GH19a} for singular SPDEs in domains with boundaries. Carrying this out would in particular yield a full large deviation principle for the eigenvalues in dimension $3$, thereby removing the conditioning arguments of \cite{HL23}.

\paragraph{Neumann boundary conditions.} For $\bA = 0$, constructions with Neumann boundary conditions are available \cite{CvZ21, MvZ25}. In our approach, the Dirichlet condition plays a special role: it is preserved by the exponential transformation, since $u = e^{-h} v$ vanishes on $\partial B$ if and only if $v$ does. The Neumann condition on $u$, in contrast, would be twisted into the Robin condition $\partial_n v = (\partial_n h) v$ by the exponential transform $u = e^{-h}v$, whose coefficient involves the boundary trace of $\nabla h \in \cC^{-\kappa}$ which is ill-defined. A natural remedy is to adapt the kernel to the boundary, replacing $G$ by a localised Neumann Green function $G^{\mathrm{N}}$ built by even reflection across $\partial B$, so that the resulting field $\rh^{\mathrm{N}} = G^{\mathrm{N}} \ast \xi$ satisfies $\partial_n \rh^{\mathrm{N}} = 0$ in the sense of distributions. Interestingly, the constant needed to renormalise $|\nabla \rh^{\mathrm{N}}|^2$ would remain unchanged: the limit of $C^{(\eps)} - |\nabla \rh^{\mathrm{N}, (\eps)}|^2$ consists of a second-order homogeneous Wiener chaos and a term that is a function which diverges logarithmically at the boundary but can make sense as a distribution belonging to $\cC^{-\delta}(B)$ for every $\delta > 0$. We therefore expect the construction to extend to the Neumann case. On the other hand, the arguments of Section \ref{sec:asymptotics} rely on the stationarity of the model, which is lost for the kernel $G^{\mathrm{N}}$, and their adaptation requires additional work.

\paragraph{White noise magnetic field.} One may wish to take the magnetic field $\nabla \times \bA$ to be itself a Gaussian white noise, independent of $\xi$, as considered in \cite{MM22}. In the Coulomb gauge, the associated vector potential $\bA$ is a Gaussian field of regularity $\cC^{-\delta}$ for every $\delta > 0$ and almost surely not a function. In this case $\nabla \cdot \bA = 0$, while the terms $|\bA|^2$ and $\nabla h \cdot \bA$ appearing in $F_2$ can no longer be defined as pathwise products. The former requires a logarithmically divergent Wick renormalisation as in \cite{MM22}, whereas the latter requires no counterterm: by independence, the would-be renormalisation constant vanishes, and $\nabla h \cdot \bA$ exists in the second-order Wiener chaos associated to two independent Gaussian white noises. In other words, these two terms would join $(h, \zeta)$ as components of an enhanced model. As for the asymptotics, Assumption \ref{ass:A} fails as stated, since pointwise evaluations of $\bA$ are no longer defined. A natural substitute is obtained by testing $\bA$ against test functions $\varphi^\lambda_z$ of scale $\lambda$ (a parameter replacing the difference $|x - y|$ in \eqref{eq:A_bound}) centred at points $z \in B_{0,L} \cap \Z^2$: a Gaussian computation shows that, almost surely, the supremum of $|\bA(\varphi^\lambda_z)|$ over all $z$ is of order $\sqrt{\log L} \lambda^{-\delta}$, which would formally correspond to the values $\alpha = -\delta$, $\gamma = \frac12$, just falling short of the condition $\gamma < \frac{1+\alpha}{2}$ imposed by Assumption \ref{ass:A}. This is in contrast with the Gaussian free field example, where the same $\sqrt{\log L}$ growth is compensated by the $(1-\kappa)$-Hölder regularity of $\bA$. Consequently, whether the conclusion of Theorem \ref{thm2} persists in this case is a delicate question that we leave open.

\paragraph{Full space.} Finally, one might wonder whether the operator could be constructed on the full space $\R^2$ by introducing suitable weights, as done for the Anderson Hamiltonian in \cite{HL15, HL18, HL26}. The obstruction here is structural. On a bounded box, our entire approach rests on expanding $(i\nabla + \bA)^2 = -\Delta + 2i\bA \cdot \nabla + i(\nabla \cdot \bA) + |\bA|^2$ and treating the magnetic terms as lower-order perturbations with coefficients bounded on $B$: this is what renders the quantities $F_1, F_2$ of Section \ref{sec:P} finite, and it equally underlies the eigenvalue comparison of Proposition \ref{prop:Landau-Anderson}. On $\R^2$, the vector potential typically grows at infinity (indeed, it grows linearly already for the uniform magnetic field \eqref{eq:A_unif}) and the terms $2i\bA\cdot\nabla$ and $|\bA|^2$ cease to be perturbations of the Laplacian in any relatively bounded sense. This is corroborated by the spectral picture: already for the uniform magnetic field, the Landau Hamiltonian on $\R^2$ has pure point spectrum consisting of the infinitely degenerate Landau levels, in stark contrast with the purely absolutely continuous spectrum $[0, \infty)$ of the free Laplacian, so that the magnetic operator cannot be expected to behave as a perturbation of the Laplacian. In particular, the weighted-space frameworks of the cited works, in which the noise terms must be controlled by polynomial weights $(1+|x|)^a$ with $a$ arbitrarily small, cannot accommodate the terms $\nabla h \cdot \bA$ and $|\bA|^2$ appearing in $F_2$, which grow superlinearly at infinity; this failure is a manifestation of the structural obstruction above. A construction on the full space would therefore have to treat the equation $\partial_t u = -[(i\nabla + \bA)^2 + \xi] u$ directly, within a singular SPDE theory such as regularity structures or paracontrolled distributions; an adaptation of these theories is required, since the magnetic Laplacian is not translation-invariant, and one needs to understand the Schauder estimates associated to the operator $\partial_t + (i\nabla + \bA)^2$ with a general $\bA$. We intend to implement this direction in the near future.

\bibliographystyle{Martin}
\bibliography{ref}

\end{document}